\documentclass[12pt]{amsart}

\usepackage{stmaryrd}

\usepackage{cite}

\usepackage{tikz}
\usetikzlibrary{decorations.pathreplacing}
\usepackage{float}

\usepackage{amsmath,amssymb}
\usepackage{tikz}
\usetikzlibrary{patterns}

\usepackage{amsfonts,color}
\usepackage{latexsym,amssymb}
\usepackage{amsmath}
\usepackage[utf8]{inputenc}
\usepackage{enumitem}
\usepackage{comment}

\usepackage{ragged2e}
\usepackage{tikz}
\usepackage{float}
\usepackage{booktabs}
\usepackage{subfig}

\newtheorem{theorem}{Theorem}[section]
\newtheorem{lemma}[theorem]{Lemma}
\newtheorem{proposition}[theorem]{Proposition}

\newtheorem{remark}[theorem]{Remark}

\usepackage[bookmarksnumbered,colorlinks, linkcolor=blue, citecolor=red, bookmarks, breaklinks]{hyperref}

\catcode`@=11 \@addtoreset{equation}{section} \catcode`@=12

\usepackage[left=3cm,right=3cm,top=3cm,bottom=3cm]{geometry}

\newcommand{\dx}{\,\mathrm{d}x}

\begin{document}
		
\title[Sharp Adams inequality for Critical Biharmonic Equations]{
Sharp Adams inequality for Critical Biharmonic Equations with Hardy--Hénon Weights in $\mathbb{R}^4$}

\author[J. C. de Albuquerque]{J. C. de Albuquerque}
\address{Departamento de Matem\'atica, Universidade Federal de Pernambuco \\
	50670-901, Recife--PE, Brazil}
\email{josecarlos.melojunior@ufpe.br}
\thanks{The first author was partially supported by CNPq with grant 301849/2025-0.}

\author[J. Carvalho]{J. Carvalho}
\address{Universidade Federal de Sergipe, Departamento de Matem\'atica, 49100-000 São Cristóvão-SE, Brazil}
\email{jonison@mat.ufs.br}\thanks{The second author was partially supported by CNPq with grant 300853/2025-4.}

\author[L. M. Silva]{L. M. Silva}
\address{Departamento de Matem\'atica, Universidade Federal de Pernambuco \\
	50670-901, Recife--PE, Brazil}
\email{lorena.augusto@gmail.com}

\keywords{Biharmonic equations; Critical exponential growth; Adams' inequality; Pohozaev identity.}
		
\subjclass[2010]{35J30, 35J35, 35J60, 35B33, 35J91}
		
\begin{abstract} 

We study a class of weighted fourth-order elliptic problems in \(\mathbb{R}^4\) driven by the biharmonic operator and involving Hardy--Hénon type potentials. The nonlinear term has critical exponential growth in the sense of an Adams-type inequality. The weights \(|x|^a\) and \(|x|^b\) allow us to incorporate singular, vanishing, and unbounded effects in a unified variational framework. We introduce a suitable weighted function space, establish Sobolev-type embeddings, and prove an Adams-type inequality adapted to this setting. Under appropriate structural assumptions on the nonlinearity, we prove the existence of nontrivial weak solutions. We also obtain regularity properties and derive a Pohozaev identity, which leads to a nonexistence result.
\end{abstract}
		
\maketitle


  \tableofcontents


\section{Introduction}

In this work, we study the following class of fourth-order equations:
\begin{equation}\tag{$\mathcal{P}_\lambda$}\label{P}
	\Delta^2u +|x|^{a}u = \lambda |x|^{b}f(u), \quad \mbox{in}  \  \mathbb{R}^4,
\end{equation}
where $\Delta^{2}u:=\Delta(\Delta u)$ is the biharmonic operator, $a,b\in\mathbb{R}$, $\lambda\in\mathbb{R}$, and $f$ has critical exponential growth in the sense of an Adams-type inequality. More precisely, when $b>-4$, we say that $f$ has critical exponential growth if, for every $\alpha>0$, 
\begin{equation}\label{critical}\tag{$H$}
	\lim_{|s|\to\infty}\frac{|f(s)|}{e^{\alpha s^{2}}}=\left\{
	\begin{array}{ll}
		0,  & \mbox{if } \alpha>32\pi^2(1+b/4) ,
		\\
		\infty,  &\mbox{if } \alpha<32\pi^2(1+b/4).
	\end{array}
	\right.
\end{equation}
Our aim is to provide a comprehensive treatment of the family of problems \eqref{P} by developing the fundamental tools required for a variational analysis in the presence of Hardy- or Hénon-type weights and nonlinearities with critical exponential growth. In particular, we study the existence of weak solutions and some of their qualitative properties. In addition, we establish a Pohozaev identity to analyze situations in which \eqref{P} does not admit nontrivial weak solutions. Before presenting our main results in detail, we provide a brief review of fourth-order equations and some related variants that motivate this work.

\subsection{Overview} Fourth-order equations driven by the biharmonic operator arise naturally in a wide variety of problems in physics, engineering, and applied sciences. They appear, for instance, in models for traveling waves in suspension bridges, in the theory of elastic plates and beams, and in the analysis of mechanical structures with large flexible components, see \cite{Ahmed e Harbi,Lazer e McKenna,Mckenna2,bridge}. Biharmonic models have also been introduced in other contexts, including communication satellites, the propagation of intense laser beams, and the self-focusing of whistler waves in plasmas, see \cite{Pausader,Karpman,Karpman-Shagalov} and references therein. 

In view of this wide range of applications, biharmonic equations have been extensively studied in the literature. From a mathematical perspective, many authors have considered fourth-order elliptic equations of the form
\begin{equation}\label{1}
    \Delta^{2}u-\gamma\Delta u+V(x)u=f(x,u), \quad \mbox{in } \mathbb{R}^{N},
\end{equation}
where $N\geq 4$, $\gamma\geq0$, $V$ is a weight function, and $f$ satisfies a critical growth condition. In the case $N\geq 5$, when $f$ exhibits polynomial growth, we refer the reader to \cite{ALVES-DOO-2002,ALVES-DOO-MIYAGAKI-2001,ALVES-DOO-MIYAGAKI-2003,CHABROWSKI-DOO-2002,FENG-SU-2022,NOUSSAIR-SWANSON-YANG-1992} for studies on the critical case and to \cite{ZHANG-COSTA-2014,BHAKTA-2015,CARRIAO-DEMARQUE-MIYAGAKI-2014,KANG-XU-2017,LIU-CHEN-2015} for equations with singular terms.

The case $N=4$ is a borderline case in the sense of Sobolev embedding. Precisely, if $\Omega\subset\mathbb{R}^{N}$ is a bounded domain, then for Sobolev spaces of order $m$, the classical embedding reads $W_0^{m,p}(\Omega)\hookrightarrow L^{\frac{pN}{N-pm}}(\Omega)$, whenever $pm<N.$ Thus, the limiting case corresponds to $p=N/m$. For first-order Sobolev spaces ($m=1$), the first results were obtained independently by Peetre~\cite{Peetre66}, Pohozaev~\cite{POHO65}, Trudinger~\cite{Trudinger67}, Yudovich~\cite{YUDO61}, and the sharp form of the result was later established by Moser~\cite{Moser}. These works laid the foundations for what is now known in the literature as the \textit{Trudinger--Moser inequality}. Similar results for unbounded domains were established by Adachi and Tanaka \cite{Adachi-Tanaka}, Adimurthi and Yang \cite{Adi-Yang}, Cao \cite{CAO92} and Ruf \cite{Ruf} (see also \cite{DOO01,de Souza-do O,LI-RUF-2008}). The analogous result for higher-order Sobolev spaces is known as \textit{Adams' inequality}, see \cite{Adams}. It has been generalized in several directions and we refer the reader to Ozawa \cite{Ozawa}, Ruf and Sani \cite{RufSani13}, Lam and Lu \cite{Lam-Lu,Lam-Lu2013}, Masmoudi and Sani \cite{MASMOUDI-SANI-2014}, Lu and Yang \cite{Lu-Yang}, Tarsi \cite{Tarsi} and references therein.

In view of the above discussion, if $N=4$, \(m=2\), and \(p=2\), then equation \eqref{1} lies in the borderline Sobolev regime and the critical behavior of $f$ is exponential, in the sense of Adams-type inequalities. Inspired by these results, many authors have studied variants of equation \eqref{1} via variational methods. We refer the reader to \cite{CHEN-2020,MASMOUDI-SANI-2014,MIYAGAKI-2021,Sani13} and \cite{Siberio2019,SOUZA-2024,CHEN-LI-LU-ZHANG-2018,YANG-2012} for problems involving singular weights. 

Henceforth, given a measurable weight \(Q:\mathbb{R}^{4}\to[0,\infty)\) that is positive almost everywhere, we use the following notation: for each $1\leq p<\infty$, the weighted Lebesgue space is defined by
$$
L^p(\mathbb{R}^4,Q(x)):=\left\lbrace u:\mathbb{R}^4 \to \mathbb{R} \ \mbox{measurable}  : \int_{\mathbb{R}^4}Q(x)|u|^p\dx<\infty \right\rbrace, 
$$
equipped with the norm
$$
\|u\|_{L^p(\mathbb{R}^4,Q(x))}:=\left(\int_{\mathbb{R}^4}Q(x)|u|^p\dx\right)^{1/p}.
$$
In \cite{Siberio2019}, motivated by \cite{Su-Wang-Willem1}, the authors studied fourth-order nonlinear Schr\"{o}dinger equations involving radial weights $V,K:(0,\infty)\rightarrow\mathbb{R}$ satisfying the following assumptions: 
\begin{itemize}
\item[($V$)] $V:(0,\infty)\to\mathbb{R}$ is continuous, $V>0$ and there exist $a_0>-4$, $a>-6$ such that
\[
\limsup_{r\to 0^+}\frac{V(r)}{r^{a_0}}<\infty
\quad \text{and} \quad
\liminf_{r\to \infty}\frac{V(r)}{r^{a}}>0;
\]

\item[($K$)] $K:(0,\infty)\to\mathbb{R}$ is continuous, $K>0$ and there exist $b_0>-4$, $b<a$ such that
\[
\limsup_{r\to 0^+}\frac{K(r)}{r^{b_0}}<\infty
\quad \text{and} \quad
\limsup_{r\to \infty}\frac{K(r)}{r^{b}}<\infty.
\]
\end{itemize}
The authors proved the following Adams-type inequality:
\begin{equation}\label{TMWS}
	\mathcal{L}(\alpha,V,K)\left\{
	\begin{array}{ll}
		<\infty,  & \mbox{if } 0<\alpha\leq \min\{32\pi^{2},32\pi^2(1+b_0/4)\},
		\\
		=\infty,  &\mbox{if } \alpha>32\pi^2(1+b_0/4),
	\end{array}
	\right.
\end{equation}
where 
\[
\mathcal{L}(\alpha,V,K):=\sup_{\left\lbrace u \in W_\mathrm{rad} \ : \ \|u\|_{W_\mathrm{rad}} \leq 1\right\rbrace }
\int_{\mathbb{R}^4}K(|x|)\left(e^{\alpha u^{2}}-1\right)\dx
\]
and
$$
W_\mathrm{rad}:=\left\lbrace u\in D^{1,2}_{\mathrm{rad}}(\mathbb{R}^{4})\cap L^{2}(\mathbb{R}^{4}, V(|x|)):|\Delta u|\in L^{2}(\mathbb{R}^{4})\right\rbrace,
$$
endowed with the norm
\begin{equation}\label{norma-Siberio}
    \|u\|_{W_\mathrm{rad}}:=\left[\int_{\mathbb{R}^{4}}\left(|\Delta u|^{2}+|\nabla u|^{2}+V(|x|)u^{2}\right)\,\mathrm{d}x\right]^{1/2}.
\end{equation}
They also proved that $W_\mathrm{rad}$ is continuously and compactly embedded into $L^{q}(\mathbb{R}^{4},K(|x|))$, for $2\leq q <\infty$. As an application, they studied the existence of weak solutions for the following class of nonhomogeneous semilinear elliptic equations:
\begin{equation}\label{eqsiberio}
    \Delta^{2}u-\Delta u+V(|x|)u=K(|x|)f(u)+V(|x|)h(|x|), \quad \mbox{in } \mathbb{R}^{4},
\end{equation}
where $f$ satisfies critical exponential growth and $\|h\|_{L^{2}(\mathbb{R}^{4},V(|x|))}$ is sufficiently small. In this direction, we also cite \cite{SOUZA-2024}, where the authors studied the existence of ``\textit{bound state solutions}'' for the following class of equations 
\begin{equation}\label{eqlorena}
    \Delta^{2}u-\Delta u+V(x)u=K(x)f(x,u), \quad \mbox{in } \mathbb{R}^{4},
\end{equation}
where $V$ and $K$ may decay to zero at infinity and $f$ has critical exponential growth in the sense of an Adams-type inequality.

\subsection{Main results} Our study of equation \eqref{P} is particularly inspired by \cite{Su-Wang-Willem1}. The weights $|x|^{a}$ and $|x|^{b}$ allow one to treat singular, vanishing, and unbounded coefficients in a unified way, interpolating between Hardy-type and H\'enon-type effects. Our aim is twofold. First, we develop the theory associated with an appropriate function space. More precisely, we present a detailed study of weighted Sobolev embeddings, characterize the situations in which such embeddings fail, and prove a weighted Adams-type inequality. Second, we apply these results to the analysis of equation \eqref{P}, investigating the existence, nonexistence, and regularity of solutions.

For $a>-4$, we define the function space
$
E^a:=\overline{C_0^\infty(\mathbb{R}^4)}^{\|\cdot\|_a},
$
where
\begin{equation}\label{nossa-norma}
\|u\|_a:=\left[\int_{\mathbb{R}^4}\left(|\Delta u|^2+|x|^au^2\right)\dx\right]^{1/2}. 
\end{equation}

It is important to note that \(a>-4\) is necessary to ensure that this norm is well defined. Indeed, if $\psi \in C_0^\infty(\mathbb{R}^4)$ satisfies $\psi\equiv1$ in $B_1:=\{x\in\mathbb{R}^4 : |x|<1\}$, then
    \[
    \int_{\mathbb{R}^4}|x|^a\psi^2\dx \geq \int_{B_1}|x|^{a}\dx.
    \]
In this case \(|x|^{a}\in L^{1}(B_1)\) if and only if $a>-4$. Similarly, the condition \(b>-4\) ensures the local integrability of the weight $|x|^{b}$ near the origin, which is required in the definition of \(L^p(\mathbb{R}^4,|x|^b)\).

We also consider the radial space
$$
E^a_{\mathrm{rad}}:=\left\lbrace u \in E^a : g  u = u , \  \forall g \in \mathcal{O}(4)\right\rbrace,
$$
where \(
gu(x):=u(g^{-1}x)
\) and $\mathcal{O}(4)$ denotes the group of orthogonal transformations in $\mathbb{R}^4$. An element in that space can also be denoted by \(u(x)=u(|x|)\).

Our first main result can be stated as follows:

\begin{theorem}[Weighted Sobolev embedding]\label{Teoimersao} Suppose that $a,b>-4$. Then the embedding $E^a_{\mathrm{rad}} \hookrightarrow L^p(\mathbb{R}^4, |x|^b)$ is continuous for $\gamma_{a,b}\leq p< \infty$, where 
	\begin{equation}\label{gamma}
	\gamma_{a,b}:= \max \left\{2, \frac{4(b-a)}{a+4}+2\right\}.   
	\end{equation}
Furthermore, the embedding  $E^a_{\mathrm{rad}} \hookrightarrow L^p(\mathbb{R}^4, |x|^b)$ is compact when $\gamma_{a,b}< p< \infty.$
\end{theorem}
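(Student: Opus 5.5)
The plan follows the Su--Wang--Willem strategy: obtain pointwise decay estimates for radial functions, split $\mathbb{R}^4$ into a ball and its exterior, and control each piece separately.

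\textbf{Step 1: behaviour near the origin.} Since $E^a\hookrightarrow D^{2,2}(\mathbb{R}^4)$ with $\|\Delta u\|_2\le\|u\|_a$, we have $\|\nabla u\|_{L^4}\le C\|u\|_a$ by Sobolev. For radial $u$, integrating $u'$ from $r$ to $1$ and using Hölder gives
\[
|u(r)|\le |u(1)|+C\|u\|_a\,|\log r|^{3/4}, \qquad 0<r\le 1.
\]
Because $b>-4$, this yields $\int_{B_1}|x|^b|u|^p\dx\le C\|u\|_a^p$ for every finite $p$. Here $|u(1)|\le C\|u\|_a$ follows from the radial decay estimate of Step 2 together with the $L^2(|x|^a)$ control.

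\textbf{Step 2: decay at infinity.} I will prove a Strauss-type lemma: for radial $u\in E^a$ and $r\ge1$,
\[
|u(r)|\le C\,\|u\|_a\,r^{-\frac{a+4}{4}}.
\]
The idea is to interpolate between $\int|x|^a u^2$ and $\int|u'|^2|x|^{\beta}$, where the latter is bounded via $\Delta u$ through a weighted Hardy/Rellich inequality for radial functions. Concretely, write $u^2(r)=-\int_r^\infty 2uu'\,ds$ and use Cauchy--Schwarz with weights $s^{(a+3)/2}$ and $s^{3/2-(a+3)/2}$. The factor $\int_r^\infty s^3 u'^2$ is controlled by $\|\nabla u\|_2$, which in turn is controlled in the radial space by interpolating $\int|x|^a u^2$ and $\int|\Delta u|^2$. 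This step is the main obstacle: the exponent must be tuned so that the decay rate is exactly $(a+4)/4$, which is what produces $\gamma_{a,b}$. I would first try the identity $\int|\nabla u|^2=-\int u\Delta u$, localized to $\{|x|>r\}$ with weights, to get $r^{(a+4)/2}u(r)^2\le C\|u\|_a^2$. If the exact exponent cannot be reached this way, I would fall back on an Ni/Su--Wang--Willem pointwise estimate for the radial Laplacian, $|u(r)|\le Cr^{-3/2}\|u\|\ldots$, combined with the weight.

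\textbf{Step 3: continuity outside $B_1$.} Write
\[
\int_{|x|>1}|x|^b|u|^p\dx\le \sup_{r>1}\bigl(r^{b-a}|u|^{p-2}\bigr)\int|x|^a u^2\dx\le C\|u\|_a^p\sup_{r>1}r^{\,b-a-\frac{(a+4)(p-2)}{4}}.
\]
The supremum is finite exactly when $p\ge\frac{4(b-a)}{a+4}+2$. Together with $p\ge2$ and Step 1, this gives continuity for $p\ge\gamma_{a,b}$.

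\textbf{Step 4: compactness.} Take $u_n\rightharpoonup0$ in $E^a_{\mathrm{rad}}$.
\begin{itemize}[label={}]
\item On the annulus $\{\varepsilon<|x|<R\}$, local compactness of $H^2_{\mathrm{loc}}$ (Rellich) gives $u_n\to0$ uniformly, since $H^2\hookrightarrow C^0$ fails in dimension four but radial functions are $H^1$ in one variable away from the origin.
\item Near $0$, Step 1 with a slightly larger exponent $q>p$ and Hölder gives a tail that is small uniformly in $n$, because $|x|^b$ is integrable there.
\item At infinity, the strict inequality $p>\gamma_{a,b}$ makes the exponent in Step 3 negative, so the contribution is at most $CR^{-\delta}$ uniformly in $n$.
\end{itemize}
Hence $u_n\to0$ in $L^p(\mathbb{R}^4,|x|^b)$, which is the claimed compactness.
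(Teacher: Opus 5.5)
There is a genuine gap in Step 2, and your plan depends on it throughout.

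Your overall architecture matches the paper's. It combines a radial decay estimate of order $r^{-(a+4)/4}$, separate control near the origin, and the splitting $\int_{B_R}+\int_{\mathbb{R}^4\setminus B_R}$ with exponent $b-a-(p-2)(a+4)/4\le 0$. Steps 1, 3 and 4 are fine once that decay estimate is available. Your bound $|u(r)|\le|u(1)|+C\|u\|_a|\log r|^{3/4}$ is a valid, more elementary substitute for the paper's local embedding $E^a\hookrightarrow H^2(B_R)$, and your one-dimensional compactness on annuli replaces Rellich--Kondrachov on $B_R$.

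\textbf{Why Step 2 fails.} You flag it as the main obstacle but do not resolve it, and the route you propose cannot work in general. It hinges on bounding $\int_r^\infty s^3|u'|^2\,ds$ by $\|\nabla u\|_{L^2}^2$, and then controlling $\|\nabla u\|_{L^2}$ by $\|u\|_a$. The gradient is not controlled in $E^a$. For $u_\lambda(x)=u(\lambda x)$ one has:
\begin{itemize}
\item $\|\Delta u_\lambda\|_{2}=\|\Delta u\|_2$;
\item $\int|x|^a u_\lambda^2\,dx=\lambda^{-(a+4)}\int|x|^a u^2\,dx$;
\item $\|\nabla u_\lambda\|_2^2=\lambda^{-2}\|\nabla u\|_2^2$.
\end{itemize}
Letting $\lambda\to0^+$ rules out any bound $\|\nabla u\|_2\le C\|u\|_a$ when $-4<a<-2$. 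Concretely, a radial $u$ equal to $|x|^{-\beta}$ for $|x|\ge 1$, with $(a+4)/2<\beta\le 1$, lies in $E^a_{\mathrm{rad}}$ but has $\nabla u\notin L^2$. This is exactly the point the paper stresses when comparing with norms that contain a gradient term.

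Your fallbacks (Ni or Su--Wang--Willem type Strauss estimates) also require $\nabla u\in L^2$, so they do not close the gap. The bound on $|u(1)|$ in Step 1, all of Step 3, and the uniform tail estimate in Step 4 rest on Step 2. The argument is therefore incomplete as written.

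\textbf{The missing idea.} Bound $u'$ pointwise using only $\Delta u\in L^2$.
\begin{enumerate}
\item For smooth radial $u$, $r^3u'(r)=\int_0^r s^3\Delta u(s)\,ds$. Cauchy--Schwarz gives $|u'(r)|\le (2\sqrt{\omega_3})^{-1}r^{-1}\|\Delta u\|_{L^2}$ for every $r>0$.
\item Then $u^2(r)=-2\int_r^\infty uu'\,ds\le \omega_3^{-1/2}\|\Delta u\|_{L^2}\int_r^\infty|u|s^{-1}\,ds$.
\item Apply Cauchy--Schwarz against $s^{a+3}u^2$ and $s^{-a-5}$; the latter is integrable at infinity precisely because $a>-4$. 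This gives $u^2(r)\le C\,r^{-(a+4)/2}\|u\|_a^2$ for all $r>0$, which is the paper's radial lemma.
\end{enumerate}

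Alternatively, the ingredient you already use in Step 1, $\|\nabla u\|_{L^4}\le C\|\Delta u\|_{L^2}$, suffices. Hölder with exponents $(4,\tfrac43)$ gives $\int_r^\infty|u||u'|\,ds\le\bigl(\int_r^\infty|u'|^4s^3\,ds\bigr)^{1/4}\bigl(\int_r^\infty|u|^{4/3}s^{-1}\,ds\bigr)^{3/4}$. Hölder with $(\tfrac32,3)$ against $s^{a+3}u^2$ then bounds the last factor by $C\|u\|_{L^2(\mathbb{R}^4,|x|^a)}\,r^{-(a+4)/2}$; the leftover power $s^{-2a-9}$ is integrable at infinity since $a>-4$.

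With either version of Step 2 in place, including the bound on $|u(1)|$, the rest of your plan goes through.
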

In view of Theorem \ref{Teoimersao}, for each $\nu\geq\gamma_{a,b}$, we denote by \(S_\nu>0\) the best constant of the embedding
$
E^a_{\mathrm{rad}}\hookrightarrow L^\nu(\mathbb{R}^4,|x|^b),
$
that is,
\begin{equation}\label{snu}
S_\nu
:=
\inf_{u\in E^a_{\mathrm{rad}}\setminus\{0\}}
\frac{\|u\|_a^2}
{\left(\displaystyle\int_{\mathbb{R}^4}|x|^b|u|^\nu\dx\right)^{2/\nu}}.
\end{equation}

Now, in order to study \eqref{P} when $f$ has critical exponential growth, we shall prove an Adams-type inequality. We define 
$$
\Phi_\alpha(s):= e^{\alpha s^2}-\sum_{j=0}^{j_0-1}\frac{\alpha^j}{j!}s^{2j}, \quad s \in \mathbb{R},
$$
where $j_0:=\displaystyle\min\{j\in \mathbb{N}:j \geq \gamma_{a,b}/2\}$. Inspired by \cite{Adams,Siberio2019,RufSani13,Sani13}, we obtain the following weighted Adams-type inequality:

\begin{theorem}[Weighted Adams-type inequality]\label{TM}
	Assume that $a,b>-4$. Then, for every $u \in E^a_\mathrm{rad}$ and $\alpha>0$, it holds that $\Phi_\alpha(u) \in L^1(\mathbb{R}^4,|x|^{b})$. Furthermore, 
	$$
	\mathcal{S}(\alpha,a,b):=\sup_{\left\lbrace u \in E^a_\mathrm{rad} \ : \ \|u\|_a \leq 1\right\rbrace }\int_{\mathbb{R}^4}|x|^{b}\Phi_\alpha(u)\dx<\infty,
	$$
	whenever $0<\alpha<32\pi^2(1+b/4)$. Moreover, \(\mathcal{S}(\alpha,a,b)=\infty\) when $\alpha>32\pi^2(1+b/4)$.
\end{theorem}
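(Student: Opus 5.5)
We split $\mathbb{R}^4$ into the unit ball $B_1$ and its complement $B_1^c$ and treat the two regions by different mechanisms. On $B_1^c$ the exponential is harmless, and the series tail is controlled by the embedding of Theorem \ref{Teoimersao}. On $B_1$ we use Adams' inequality with the weight $|x|^b$, after a change of variables that absorbs the weight.

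\textbf{Exterior region.} Expand
\[
\Phi_\alpha(u)=\sum_{j\ge j_0}\frac{\alpha^j}{j!}u^{2j}.
\]
For radial $u\in E^a_{\mathrm{rad}}$ we first prove a radial decay lemma of Strauss/Ni type: $|u(x)|\le C\|u\|_a|x|^{-\theta}$ for $|x|\ge1$, with some $\theta=\theta(a)>0$. To get it, write $u(r)$ in terms of $\Delta u$ using the radial Green representation and interpolate with $\int|x|^au^2$. In particular $|u|\le C\|u\|_a$ on $B_1^c$. Then, for $\|u\|_a\le1$,
\[
\int_{B_1^c}|x|^b\Phi_\alpha(u)\,dx\le\sum_{j\ge j_0}\frac{\alpha^j}{j!}\,C^{2j-\gamma}\int_{B_1^c}|x|^b|u|^{\gamma}\,dx ,
\]
where $\gamma\in[\gamma_{a,b},2j_0]$. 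The integral on the right is bounded by $S_\gamma^{-\gamma/2}$ via Theorem \ref{Teoimersao}. The series converges, so this part is bounded uniformly in $u$. Here the choice of $j_0$ with $2j_0\ge\gamma_{a,b}$ is exactly what makes every term integrable.

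\textbf{Interior region.} We reduce to an unweighted Adams inequality via the radial change of variables $r=s^{\kappa}$ with $\kappa=4/(4+b)$. This maps $\int_{B_1}|x|^b g(|x|)\,dx$ to $\kappa\int_{B_1}g(|y|^\kappa)\,dy$. The difficulty is that, for a fourth-order operator, this change does not preserve $\int|\Delta u|^2$, unlike the Dirichlet integral in the second-order case of Su--Wang--Willem.

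To handle this we use Adams' inequality with singular weight $|x|^b$ on bounded domains, in the form proven by Lam--Lu / Yang (cited in the paper). For functions with zero trace on $\partial B_1$ and $\int_{B_1}|\Delta v|^2\le1$, it gives
\[
\sup\int_{B_1}|x|^b e^{\beta v^2}\,dx<\infty \quad\text{for }\beta\le 32\pi^2(1+b/4).
\]
Our $u$ does not vanish on $\partial B_1$, so we apply this to $v=(u-u(1))\varphi$ or a similar truncation. Alternatively, we subtract the biharmonic extension $h$ of the boundary data, which is bounded by the decay lemma. We then use $(v+h)^2\le(1+\varepsilon)v^2+C_\varepsilon h^2$ and choose $\varepsilon$ with $(1+\varepsilon)\alpha<32\pi^2(1+b/4)$. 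This is where the strict inequality on $\alpha$ is needed. The main obstacle is controlling $\|\Delta v\|_{L^2(B_1)}\le 1+o(1)$ in this step.

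Finiteness of $\int|x|^b\Phi_\alpha(u)$ for arbitrary $\alpha$ and arbitrary $u\in E^a_{\mathrm{rad}}$ follows by density. Approximate $u$ by $w\in C_0^\infty$ with $\|u-w\|_a$ small and write $u^2\le(1+\varepsilon)(u-w)^2+C_\varepsilon w^2$. Then apply the uniform bound to $(u-w)/\|u-w\|_a$, with small effective exponent, together with Hölder's inequality.

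\textbf{Sharpness.} For $\alpha>32\pi^2(1+b/4)$ we use radial Adams-type test functions concentrating at the origin. On $|x|\le\rho_n$ set $u_n=\sqrt{\log(1/\rho_n)/(8\pi^2)}+$ a quadratic correction. On $\rho_n\le|x|\le1$ take the Adams profile $\log(1/|x|)/\sqrt{8\pi^2\log(1/\rho_n)}$, smoothly cut off near $|x|=1$. The standard computation gives $\|\Delta u_n\|_2^2=1+O(1/\log(1/\rho_n))$, and the $|x|^a$-term is $O(1/\log)$ since $a>-4$. After normalizing,
\[
\int_{B_{\rho_n}}|x|^b\Phi_\alpha(u_n)\,dx\gtrsim \rho_n^{4+b}\,e^{\alpha\log(1/\rho_n)/(32\pi^2)(1+o(1))},
\]
which tends to $\infty$ exactly when $\alpha>32\pi^2(1+b/4)$.
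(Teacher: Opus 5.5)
Your architecture is the paper's. You split off a ball and control the exterior by the radial decay estimate together with Theorem \ref{Teoimersao}; this part is essentially identical to the paper's. On the ball you subtract the boundary value and use $(v+u(1))^2\le(1+\delta)v^2+C_\delta u(1)^2$ with a weighted Adams inequality, and you use the Lu--Yang test functions for sharpness. For $-4<b\le 0$ this works.

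\textbf{The gap is the interior step for $b>0$.} The inequality you invoke is $\sup\int_{B_1}|x|^b e^{\beta v^2}\,dx<\infty$ over all $v$ with zero trace and $\|\Delta v\|_{L^2(B_1)}\le 1$, for $\beta\le 32\pi^2(1+b/4)$. This is Lam--Lu's singular Adams inequality, and it only covers $b\le 0$. For $b>0$ it is false as stated. Take an Adams concentrating sequence centered at a point $x_0\neq 0$, where the weight is bounded below by a positive constant; the integral then blows up for every $\beta>32\pi^2$.

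The enhanced exponent for a H\'enon weight holds only for \emph{radial} functions. You need a separate radial Adams-type inequality with weight $|x|^b$ on the ball under Navier boundary conditions; the paper cites Calanchi--Tarsi, Theorem 1, for exactly this. For $b>0$ the bound with $\alpha<32\pi^2$ already follows from the unweighted Adams inequality, since $|x|^b\le 1$ on $B_1$. So the range $32\pi^2\le\alpha<32\pi^2(1+b/4)$---the main new content of the theorem---is exactly what your argument leaves uncovered. You set aside the change of variables $r=s^\kappa$ yourself, and nothing replaces it.

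Two smaller points. First, the ``main obstacle'' you flag disappears if you take $v=u-u(1)$ on $B_1$ with no cutoff. Since $u$ is radial, $v=0$ on $\partial B_1$, so $v\in H^1_0(B_1)\cap H^2(B_1)$ by Lemma \ref{lema-imersao-local}. Moreover $\Delta v=\Delta u$, hence $\|\Delta v\|_{L^2(B_1)}\le\|u\|_a\le 1$ exactly. You only need the Adams inequality in the Navier space $H^1_0\cap H^2$ rather than $H^2_0$, and no biharmonic extension is needed. A cutoff $\varphi$ would instead produce terms $2\nabla\varphi\cdot\nabla u+(u-u(1))\Delta\varphi$ of size $O(1)$, not $o(1)$.

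Second, in the sharpness step you have $u_n^2\ge\log(1/\rho_n)/(8\pi^2)$ on $B_{\rho_n}$. The lower bound is therefore $\rho_n^{4+b}e^{\alpha\log(1/\rho_n)(1+o(1))/(8\pi^2)}$. With the $32\pi^2$ you wrote in the denominator, the threshold would come out as $32\pi^2(4+b)$ instead of $32\pi^2(1+b/4)$.
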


We also obtain information on the critical case by considering 
    \[
    \mathcal{D}:= \lbrace u\in E^a_{\mathrm{rad}} : 0\leq u(|x|) \leq u(|y|), \ \ \forall \, |y|\leq |x|\rbrace. 
    \]

\begin{proposition}[Critical case]\label{caso-critico}
    If \(\alpha = 32\pi^2(1+b/4)\), then
    \[
    \sup_{\left\lbrace u \in \mathcal{D} \ : \ \|u\|_a \leq 1\right\rbrace } \int_{\mathbb{R}^4}|x|^{b}\Phi_\alpha(u)\dx<\infty.
    \]
\end{proposition}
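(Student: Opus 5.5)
For $u\in\mathcal{D}$ with $\|u\|_a\le 1$, I would split $\mathbb{R}^4=B_{R}\cup(\mathbb{R}^4\setminus B_R)$ for a fixed radius $R$ and treat the two regions separately. The monotonicity built into $\mathcal{D}$ drives both parts. On the exterior it gives a pointwise decay bound in terms of the $L^2(|x|^a)$ part of the norm alone. On the ball it lets us compare $u$ with its value on the sphere $\partial B_R$ and reduce to a bounded-domain Adams inequality with boundary data.

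\textbf{Exterior region.} Since $u$ is radial, nonnegative and nonincreasing,
\[
u(r)^2\int_{B_r}|x|^a\dx\le\int_{B_r}|x|^a u^2\dx\le 1 .
\]
Hence $u(r)^2\le C r^{-(a+4)}$ for all $r>0$, so $u$ is uniformly bounded on $\{|x|\ge R\}$, say $u\le M_R$. There $\Phi_\alpha(u)\le C(M_R)\,u^{2j_0}$, because $\Phi_\alpha(s)\le C s^{2j_0}$ for $|s|\le M_R$. Since $2j_0\ge\gamma_{a,b}$, Theorem~\ref{Teoimersao} gives $\int |x|^b u^{2j_0}\le S_{2j_0}^{-j_0}\|u\|_a^{2j_0}\le C$. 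So the exterior contribution is bounded uniformly in $u$.

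\textbf{Interior region.} Set $v:=u-u(R)$ on $B_R$. Then $v\ge0$, $v=0$ on $\partial B_R$, and $\int_{B_R}|\Delta v|^2\le\|u\|_a^2-\int|x|^au^2$. Write $u^2=(v+u(R))^2\le(1+\varepsilon)v^2+(1+\varepsilon^{-1})u(R)^2$. Here I would choose $\varepsilon$ depending on $u$, namely
\[
1+\varepsilon=\frac{1}{\int_{B_R}|\Delta v|^2},
\]
so that $(1+\varepsilon)\int_{B_R}|\Delta v|^2\le 1$. It then remains to control the factor $e^{\alpha(1+\varepsilon^{-1})u(R)^2}$. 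One needs
\[
\varepsilon^{-1}u(R)^2\lesssim \frac{u(R)^2}{1-\int|\Delta v|^2}\le\frac{u(R)^2}{\int|x|^au^2},
\]
and the decay estimate gives $u(R)^2\le \frac{\int_{B_R}|x|^au^2}{\int_{B_R}|x|^a}\le C\int|x|^au^2$. This ratio is therefore bounded, which is exactly the Ruf-type trick adapted to the weighted setting.

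With this, $\int_{B_R}|x|^b e^{\alpha u^2}\le C\int_{B_R}|x|^b e^{\alpha w^2}$, where $w=\sqrt{1+\varepsilon}\,v$ satisfies $\|\Delta w\|_{L^2(B_R)}\le1$ with Navier/Dirichlet-type boundary values. The polynomial part of $\Phi_\alpha$ is already controlled by Theorem~\ref{Teoimersao}. To handle the weight $|x|^b$, I would perform the radial change of variables $s=r^{(4+b)/4}$, i.e.\ $\tilde w(s)=w(r)$. This maps $\int_{B_R}|x|^b e^{\alpha w^2}$ to $\frac{4}{4+b}\int_{B_{R'}}e^{\alpha\tilde w^2}$. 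The bounded-domain Adams inequality with the sharp constant $32\pi^2$ then applies, once one checks that $\|\Delta\tilde w\|^2_{L^2}\le \frac{4}{4+b}\|\Delta w\|^2_{L^2}$ up to lower-order terms, which matches the threshold $32\pi^2(1+b/4)$.

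\textbf{Main obstacle.} The hardest step is this last one. Under the change of variables, the radial Laplacian produces cross terms involving $\tilde w'(s)/s$ with a non-sharp coefficient when $b\ne0$. One must show these terms do not increase the $\Delta$-norm beyond the factor $\frac{4}{4+b}$, or can be absorbed. I would first try to use the monotonicity ($\tilde w'\le0$) together with a one-dimensional Adams/Moser lemma in the variable $t=-\log s$, as in Adams' original proof via the representation $w=\frac{1}{8\pi^2}\int |x-y|^{-2}\Delta w$. If that fails, the fallback is to prove directly, via an O'Neil-type rearrangement estimate, a sharp weighted Adams inequality on balls for radial decreasing functions in $W^{2,2}\cap W^{1,2}_0$.
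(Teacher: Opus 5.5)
\paragraph{Verdict.} Your reduction is correct, but the proof stops short of the one step that carries the proposition: the sharp weighted Adams inequality on a ball at the critical exponent. That step is neither proved nor cited, so as written the argument is incomplete.

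\paragraph{The reduction.} It is essentially the paper's. You split at $|x|=R$ and subtract $u(R)$ on $B_R$. Monotonicity in $\mathcal{D}$ then gives $u(R)^2\int_{B_R}|x|^a\dx\le\int_{\mathbb{R}^4}|x|^au^2\dx$, which pays for the loss $1-\int_{B_R}|\Delta v|^2\ge\int|x|^au^2$. The paper uses Young's inequality with parameter $u(R)^2$ (i.e.\ $1+\varepsilon=1+u(R)^2$) and then takes $R$ so large that $(a+4)/(2\pi^2R^{a+4})\le 1$. Your adaptive choice $1+\varepsilon=(\int_{B_R}|\Delta v|^2)^{-1}$ is a valid variant that works for every $R$. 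Your exterior bound via monotonicity and Theorem~\ref{Teoimersao} is a fine substitute for the paper's use of Lemma~\ref{lema-radial}.

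\paragraph{The missing step.} After the reduction you still need
\[
\sup\Bigl\{\int_{B_R}|x|^be^{32\pi^2(1+b/4)w^2}\dx:\ w\ \text{radial},\ w\in H^2(B_R)\cap H^1_0(B_R),\ \|\Delta w\|_{L^2(B_R)}\le 1\Bigr\}<\infty .
\]
This is a sharp weighted Adams inequality with Navier data at exactly the critical exponent, and it is the real content of the proposition. You leave it as an open ``main obstacle'' with tentative strategies. The paper closes it by quoting known results: Lam--Lu's singular Adams inequality for $-4<b\le 0$, and Calanchi--Tarsi's radial weighted inequality for $b>0$.

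\paragraph{Your substitution.} It does not settle the matter uniformly in $b$. Set $\kappa=1+b/4$, $S=R^\kappa$, and
\[
A=\int_0^S\tilde w''^2s^3\,ds,\qquad B=\int_0^S\tilde w'^2s\,ds,\qquad T=S^2\tilde w'(S)^2 .
\]
A direct computation gives
\[
\|\Delta w\|_{L^2(B_R)}^2-\kappa\|\Delta\tilde w\|_{L^2(B_S)}^2=\omega_3\,\kappa(\kappa-1)\bigl[(\kappa+1)(A-B)+(\kappa+3)T\bigr].
\]
Integrating $B$ by parts and applying Cauchy--Schwarz yields $B\le A+T$.

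\emph{Case $b\ge 0$.} The bracket is at least $2T\ge 0$, so $\kappa\|\Delta\tilde w\|^2\le\|\Delta w\|^2$ holds exactly, with no error term. The Navier Adams inequality with constant $32\pi^2$, applied to $\sqrt{\kappa}\,\tilde w$ on $B_S$, then finishes the argument.

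\emph{Case $-4<b<0$.} Here $\kappa-1<0$. The bracket is positive, for instance, when $\tilde w'(S)=0$ and $\tilde w'$ is a narrow negative bump (so $A\gg B$). This already happens for radially decreasing $w$, so the needed inequality fails and monotonicity does not rescue it.

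Nor can the discrepancy be dismissed as ``lower order''. $B$ is a Hardy term with the same scaling as $A$; in fact $A=B$ on the logarithmic profiles $c\log(1/s)$ that saturate Adams. At the critical exponent, an additive loss in the $\Delta$-norm cannot simply be absorbed. In the singular range you therefore genuinely need Lam--Lu's inequality, or the O'Neil-type argument you mention as a fallback, which amounts to reproving it.
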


As an application of the theory developed, we study the existence of solutions to Problem \eqref{P} when $f$ is continuous and satisfies the critical exponential growth \eqref{critical}. For this purpose, we consider the following assumptions:

\begin{enumerate}[label=($f_{1}$),ref=$(f_{1})$]
	\item \label{f1} $\displaystyle\lim_{s\to0}f(s)/|s|^{\gamma_{a,b}-1}=0$, where \(\gamma_{a,b}\) is defined in \eqref{gamma};
\end{enumerate}

\begin{enumerate}[label=($f_{2}$),ref=$(f_{2})$]
	\item \label{f2} there exists $\theta>\gamma_{a,b}$ such that
	$$
	0<\theta F(s):=\theta\int_{0}^{s}f(t)\,dt\leq f(s)s, \quad \mbox{for all} \ s\neq 0;
	$$
\end{enumerate}

\begin{enumerate}[label=($f_{3}$),ref=$(f_{3})$]
	\item \label{f3} there exist \(\mu>0\) and $\nu>\gamma_{a,b}$ such that \(F(s) \geq\mu|s|^\nu\), for all \(s\in \mathbb{R}\).
\end{enumerate}

\noindent An example of nonlinearity satisfying \eqref{critical}, \ref{f1}, \ref{f2} and \ref{f3} is $F(s)=\vert s\vert^{\nu}e^{\alpha_0s^{2}}$, where $\alpha_0=32\pi^2(1+b/4)$ and $\nu>\gamma_{a,b}$.

In this context, we say that $u\in E^a$ is a weak solution to Problem \eqref{P} if it satisfies
\begin{equation}\label{weak}
	\int_{\mathbb{R}^4}\Delta u\Delta v\,\mathrm{d}x+\int_{\mathbb{R}^{4}}|x|^{a}uv\,\mathrm{d}x=\lambda \int_{\mathbb{R}^4}|x|^{b}f(u)v\,\mathrm{d}x, \quad \mbox{for all } v\in C_0^\infty(\mathbb{R}^4).
\end{equation}

Our main result concerning Problem \eqref{P} is stated as follows.

\begin{theorem}\label{T1}
	Suppose that $a,b>-4$, \eqref{critical}, \ref{f1}, \ref{f2} and \ref{f3} hold. Then the following assertions hold:
     \begin{itemize}\vspace{0,2cm}
        \item[(i)] (Existence) If 
        \[
\lambda >\lambda_0:= \max \left\lbrace \frac{S_\nu}{2\mu} , \frac{S_\nu^{\nu/2}}{\mu\nu}\left(\frac{\theta(\nu-2)}{\nu({\theta-2})}\right)^{(\nu-2)/2} \right\rbrace,
\]
where $S_\nu$ is defined in \eqref{snu}, then Problem \eqref{P} admits a nontrivial weak solution.
\vspace{0,1cm}
         \item[(ii)] (Regularity) If $u\in E^a_{\mathrm{rad}}$ is a weak solution of \eqref{P}, then \vspace{0,1cm}
    \begin{itemize}
    \item [$(a)$] there exists \(1<\widetilde{p}<\infty\) such that \(u \in W^{4,\widetilde{p}}_{\mathrm{loc}} (\mathbb{R}^4) \cap C^{0,\widetilde{\sigma}}_{\mathrm{loc}}(\mathbb{R}^4) \cap L^\infty(\mathbb{R}^4)\), for some $\widetilde{\sigma}\in(0,1)$;
\vspace{0,1cm}
    \item[$(b)$] if \(a,b>-1\), then \(u \in C^{3,\sigma}_{\mathrm{loc}}(\mathbb{R}^4)\), for some \(\sigma\in(0,1)\);
\vspace{0,1cm}
    \item[$(c)$] if \(a,b\geq0\) and \(f\) is locally Lipschitz continuous, then \(u \in C^{4,\bar{\sigma}}_{\mathrm{loc}}(\mathbb{R}^4)\), for some \(\bar{\sigma}\in(0,1)\).
\end{itemize}
\vspace{0,1cm}
         \item[(iii)] (Pohozaev identity) If $u$ is a weak solution of \eqref{P}, then it satisfies the Pohozaev identity
\begin{equation}\label{identidade-pohozaev}
\frac{a+4}{2} \int_{\mathbb{R}^4} |x|^a u^2 \, \mathrm{d}x
= \lambda(b+4) \int_{\mathbb{R}^4} |x|^b F(u)\, \mathrm{d}x.       
\end{equation} 
\vspace{0,1cm}
        \item[(iv)] (Nonexistence) If \(a,b>-4\) and \(\lambda\leq0\), then Problem \eqref{P} admits no nontrivial weak solution.
     \end{itemize}
\end{theorem}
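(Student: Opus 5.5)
The plan is to prove the four assertions of Theorem \ref{T1} separately. Existence comes from a mountain pass argument in $E^a_{\mathrm{rad}}$ followed by the principle of symmetric criticality. Regularity comes from a bootstrap. The Pohozaev identity comes from testing against $x\cdot\nabla u$ with cut-offs, and nonexistence follows by combining it with the weak formulation.

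\textbf{Existence.} Define
\[
I_\lambda(u)=\tfrac12\|u\|_a^2-\lambda\int_{\mathbb{R}^4}|x|^bF(u)\,\mathrm{d}x \quad\text{on } E^a_{\mathrm{rad}}.
\]
By \eqref{critical} and \ref{f1}, for each $\varepsilon>0$, $q>\gamma_{a,b}$ and $\alpha>32\pi^2(1+b/4)$ there is $C$ with
\[
|F(s)|\le \varepsilon|s|^{\gamma_{a,b}}+C|s|^q\Phi_\alpha(s).
\]
Combining H\"older, Theorem \ref{Teoimersao} and Theorem \ref{TM} (applied to $u/\|u\|_a$ with $\alpha\|u\|_a^2$ below the threshold) shows that $I_\lambda$ is $C^1$ and has mountain pass geometry near $0$. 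Some care is needed when $\gamma_{a,b}=2$, where $\varepsilon$ must beat the embedding constant. Assumption \ref{f3} gives $I_\lambda(tu)\to-\infty$ along a direction. The key quantitative step is an upper bound on the minimax level. Take $u_\nu$ attaining (or nearly attaining) $S_\nu$, which is possible by the compact embedding for $\nu>\gamma_{a,b}$. By \ref{f3},
\[
c\le\max_{t\ge0}\Big(\tfrac{t^2}{2}\|u_\nu\|_a^2-\lambda\mu t^\nu\|u_\nu\|^\nu_{L^\nu(|x|^b)}\Big),
\]
which is explicit. The choice $\lambda>\lambda_0$ is designed exactly so that $c<\frac{\theta-2}{2\theta}$, i.e.\ so that $\limsup\|u_n\|_a^2<1$ for a Palais--Smale sequence. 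This uses \ref{f2}: $I(u_n)-\frac1\theta I'(u_n)u_n$ bounds $\frac{\theta-2}{2\theta}\|u_n\|_a^2$. The first term in $\lambda_0$ also helps arrange the geometry.

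Next, pass to a weak limit $u$. Since $\sup\|u_n\|_a^2<1$, choose $\alpha$ slightly above $32\pi^2(1+b/4)$ and $r>1$ with $r\alpha\|u_n\|^2_a<32\pi^2(1+b/4)$ after adjusting. Then $|x|^bf(u_n)$ is bounded in a suitable $L^r$-weighted sense, and the compact embedding for exponents $>\gamma_{a,b}$ gives
\[
\int|x|^bf(u_n)u_n\to\int|x|^bf(u)u,\qquad \int|x|^bf(u_n)v\to\int|x|^bf(u)v.
\]
Hence $u$ is a critical point. It is nontrivial because $u=0$ would force $\|u_n\|_a\to0$, contradicting $c>0$. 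By Palais symmetric criticality (the functional is $\mathcal{O}(4)$-invariant), $u$ solves \eqref{weak} for all $v\in C_0^\infty$. I expect this compactness step to be the main obstacle: the margin below $1$ has to be converted into uniform integrability of $|x|^bf(u_n)u_n$ near the origin, where $b<0$ matters, and at infinity, where the tail is controlled by $\gamma_{a,b}$.

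\textbf{Regularity.} For the regularity claims, write
\[
\Delta^2u=g:=\lambda|x|^bf(u)-|x|^au.
\]
By Theorem \ref{TM}, $e^{\alpha u^2}\in L^1_{\mathrm{loc}}$ for every $\alpha$. Hence $f(u)\in L^s_{\mathrm{loc}}$ for all $s$, and $|x|^b,|x|^a\in L^{t}_{\mathrm{loc}}$ for some $t>1$ since $a,b>-4$. Thus $g\in L^{\widetilde p}_{\mathrm{loc}}$ for some $\widetilde p>1$. Calder\'on--Zygmund estimates for $\Delta^2$ give $W^{4,\widetilde p}_{\mathrm{loc}}$, and Morrey embedding gives local H\"older continuity once $\widetilde p$ is large enough after one bootstrap. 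Radial decay of elements of $E^a_{\mathrm{rad}}$ (a Strauss-type estimate from the proof of Theorem \ref{Teoimersao}) then yields $u\in L^\infty$. If $a,b>-1$, we get $g\in L^p_{\mathrm{loc}}$ with $p>4$, so $W^{4,p}\subset C^{3,\sigma}$. If $a,b\ge0$ and $f$ is Lipschitz, $g$ is H\"older, and Schauder theory gives $C^{4,\bar\sigma}$.

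\textbf{Pohozaev identity and nonexistence.} Multiply by $(x\cdot\nabla u)\psi(x/R)$, integrate by parts, and let $R\to\infty$. The scaling gives $\int\Delta u\,\Delta(x\cdot\nabla u)=0$ in dimension $4$, while the weighted terms give
\[
-\tfrac{a+4}{2}\int|x|^au^2 \quad\text{and}\quad -\lambda(b+4)\int|x|^bF(u).
\]
Boundary terms vanish by the integrability and decay. Near the origin a second cut-off is needed when the weights are singular, using the local regularity from (ii). For (iv), if $\lambda\le0$, then the right-hand side of \eqref{identidade-pohozaev} is $\le0$ by \ref{f2}, while the left-hand side is $\ge0$. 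Therefore $\int|x|^au^2=0$, and so $u=0$. Alternatively, testing the weak formulation with $u$ gives $\|u\|_a^2=\lambda\int|x|^bf(u)u\le0$ directly.
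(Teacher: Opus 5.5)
Your plan follows the paper's proof in all four parts: mountain pass on $E^a_{\mathrm{rad}}$, the level bound $c_\lambda<\frac{\theta-2}{2\theta}$ from an extremal for $S_\nu$ and \ref{f3}, compactness from $\limsup_n\|u_n\|_a^2<1$, the $W^{4,\widetilde p}$/Schauder bootstrap, and Pohozaev with cut-offs. Showing that the weak limit is a nontrivial critical point, instead of proving strong convergence, is a harmless variant.

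The step that does not go through as written is ``by Palais symmetric criticality''. Palais' principle needs an $\mathcal{O}(4)$-invariant functional of class $C^1$ on the whole space $E^a$. But Theorems \ref{Teoimersao} and \ref{TM} are purely radial statements, and off $E^a_{\mathrm{rad}}$ the functional $I_\lambda$ need not even be finite. For instance, if $a=0<b$, take $w=\sum_k k^{-1}\phi(x-x_k)$ with $\phi\in C_0^\infty(B_1)\setminus\{0\}$ and $|x_k|\to\infty$ fast enough. Then $w$ lies in $E^a$, while $\int_{\mathbb{R}^4}|x|^bF(w)\dx\ge\mu\int_{\mathbb{R}^4}|x|^b|w|^\nu\dx=\infty$. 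So $I_\lambda'(u)=0$ on $E^a_{\mathrm{rad}}$ does not by itself give \eqref{weak}. The paper flags exactly this issue and proves a substitute, Lemma \ref{criticalidade}.

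The missing idea is that only the linear functional
$T_u(w)=\int_{\mathbb{R}^4}(\Delta u\Delta w+|x|^auw)\dx-\lambda\int_{\mathbb{R}^4}|x|^bf(u)w\dx$,
for the fixed radial $u$, is needed.

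\begin{itemize}
\item The paper proves $|T_u(w)|\le C\|w\|_a$ for every $w\in E^a$. Outside a ball it uses the decay of $u$ from Lemma \ref{lema-radial}; inside the ball it uses Lemma \ref{lema-imersao-local} for $w$.
\item It then uses uniform convexity. The unique $\widetilde u$ with $\|\widetilde u\|_a=1$ maximizing $T_u$ is $\mathcal{O}(4)$-invariant by uniqueness, hence radial. Therefore $\max_{\|w\|_a=1}|T_u(w)|=T_u(\widetilde u)=I_\lambda'(u)\widetilde u=0$.
\end{itemize}

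A shorter repair works for test functions $v\in C_0^\infty(\mathbb{R}^4)$. Average $v$ over $\mathcal{O}(4)$ with Haar measure; the average $\bar v$ is radial and lies in $C_0^\infty(\mathbb{R}^4)$. Since $T_u(gv)=T_u(v)$ for radial $u$, Fubini gives $T_u(v)=T_u(\bar v)=I_\lambda'(u)\bar v=0$.

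Your alternative proof of (iv), testing \eqref{weak} with $u$ itself, also needs this boundedness of $T_u$ on $E^a$, to pass from $C_0^\infty$ test functions to $u$ by density. With it, that argument is correct for radial solutions and bypasses the Pohozaev identity entirely.
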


\begin{remark}
Our main results extend and complement \cite{Siberio2019} in several directions. The items below make it clear that removing the gradient term from equations \eqref{eqsiberio} and \eqref{eqlorena} makes the problem more delicate, particularly regarding the tools required to treat \eqref{P} variationally.

\vspace{0,2cm}

\begin{itemize}
    \item \textbf{(Radial Lemma):} 
    First, it is worth emphasizing that, by comparing the norms \eqref{norma-Siberio} and \eqref{nossa-norma}, one observes that \(W_{\mathrm{rad}} \hookrightarrow E^a_{\mathrm{rad}}\). In this case, we may use our Lemma \ref{lema-radial} to provide a polynomial decay estimate for functions in \(W_{\mathrm{rad}}\). On the other hand, recalling that in \cite[Lemma 2.1]{Siberio2019} the authors proved that there exist \(C,R_0>0\) such that
\[
|u(x)| \leq \frac{C}{|x|^{(a+6)/4}}\|u\|_{W_{\mathrm{rad}}}, \quad \forall \, u\in W_{\mathrm{rad}} \quad \mbox{and} \quad \forall \, |x|\geq R_0,
\]
it is possible to observe that this inequality does not imply any decay for the elements of our space \(E^a_{\mathrm{rad}}\).

    \vspace{0,2cm}

    \item \textbf{(Embedding result):} In the paper \cite{Siberio2019}, the relation between the exponents \(a\) and \(b\) is given by \(b<a\). In the present work, we also consider the case \(a\leq b\). It is worth mentioning that, due to the embedding \(W_{\mathrm{rad}} \hookrightarrow E^a_{\mathrm{rad}}\), the embedding results obtained in Theorem \ref{Teoimersao} also hold in the space \(W_{\mathrm{rad}}\).

\begin{figure}[H]
\centering
\begin{tikzpicture}[scale=0.5]

\definecolor{rosa}{RGB}{220,185,200}
\definecolor{bege}{RGB}{235,205,170}

\fill[rosa] (-4,-4) -- (4,-4) -- (4,4) -- cycle;
\fill[bege] (-4,-4) -- (-4,4) -- (4,4) -- cycle;

\draw[thick] (-4,-4) -- (4,4);

\draw[->,thick] (-4.9,0) -- (4.1,0) node[right] {$a$};
\draw[->,thick] (0,-4.9) -- (0,4.1) node[above] {$b$};

\node at (-3.5,-0.45) {$-4$};
\node at (-0.55,-3.5) {$-4$};

\node[anchor=north] at (0,-5.2) {
\begin{tabular}{ll}
\raisebox{0.2ex}{\tikz\fill[rosa] (0,0) rectangle (0.3,0.3);}
&
$ \mbox{Literature}: \;  \; b< a:\; 2\le p<\infty$
\\[2mm]
\raisebox{0.2ex}{\tikz\fill[bege] (0,0) rectangle (0.3,0.3);}
&
$\mbox{Our contribution}: \;   \; b\geq a:\; \dfrac{4(b-a)}{a+4}+2\le p<\infty$
\end{tabular}
};
\end{tikzpicture}
\end{figure}
 When \(b<a\), one has \(\gamma_{a,b}=2\). In this case
$
E^a_{\mathrm{rad}}\hookrightarrow L^p(\mathbb{R}^4,|x|^b)
$
is continuous and compact for all \(p\ge2\). The compactness at the endpoint \(p=2\) follows from the additional decay produced by the condition \(b<a\). This recovers, in the present setting, the weighted Sobolev embedding obtained in \cite{Siberio2019} for the space \(W_{\mathrm{rad}}\).

    \vspace{0,2cm}

    \item \textbf{(Weighted Adams-type inequality):}
Analyzing \eqref{TMWS}, we may consider the following cases: 

\begin{figure}[H]
\centering
\begin{tikzpicture}[scale=1.2]

\draw[-] (-5,0) -- (4,0);

\draw[blue, very thick] (-5,0) -- (1.2,0);

\draw[red, very thick] (1.2,0) -- (4.5,0);

\draw (1.2,0.12) -- (1.2,-0.12);

\node[left] at (-5,0) {$-4<b_0\leq 0$};

\node[blue] at (-1.6,0.45)
{$\mathcal{L}(\alpha,V,K)<\infty$};

\node[red] at (3.9,0.45)
{$\mathcal{L}(\alpha,V,K)=\infty$};

\node at (1.2,0.55)
{$32{\pi^2}\left(1+\frac{b_0}{4}\right)$};

\draw[-] (-5,-2) -- (4,-2);

\draw[blue, very thick] (-5,-2) -- (-1.2,-2);

\draw[red, very thick] (1.2,-2) -- (4.5,-2);

\draw (-1.2,-1.88) -- (-1.2,-2.12);
\draw (1.2,-1.88) -- (1.2,-2.12);

\node[left] at (-5,-2) {$0< b_0$};

\node[blue] at (-3.0,-1.55)
{$\mathcal{L}(\alpha,V,K)<\infty$};

\node[red] at (3.8,-1.55)
{$\mathcal{L}(\alpha,V,K)=\infty$};

\node at (-1.2,-1.35)
{$32\pi^2$};

\node at (1.2,-1.35)
{$32\pi^2\left(1+\frac{b_0}{4}\right)$};

\draw[decorate,decoration={brace,mirror,amplitude=8pt},black]
(-1.2,-2.55) -- (1.2,-2.55);

\node[black] at (0,-3.05) {\small open question};

\end{tikzpicture}
\end{figure}

In this sense, our contribution is to answer this open question. Indeed, when \(V(|x|)=|x|^a\) and \(K(|x|)=|x|^b\), we have \(\|u\|_a \leq \|u\|_{W_{\mathrm{rad}}}\) for every \(u \in W_{\mathrm{rad}}\). Hence
\begin{equation}\label{comparacao-des-adams}
    \mathcal{L}(\alpha,V,K) \leq \mathcal{S}(\alpha,a,b).
\end{equation}
Hence, \(\mathcal{L}(\alpha,V,K)<\infty\) whenever \(0<\alpha<32\pi^2(1+b/4)\) (see Theorem \ref{TM}) and, in particular, when \(b=b_0\geq0\).
\end{itemize}
\end{remark}

\begin{remark}
    We also note that our approach can be applied to the following class of weighted biharmonic equations
     \begin{equation*}
	\Delta^2u +V(|x|)u =   \lambda K(|x|)f(u), \quad \mbox{in } \mathbb{R}^4,
\end{equation*}
where $V$ and $K$  satisfy assumptions $(V)$ and $(K)$ in the spirit of \cite{Su-Wang-Willem1}.
\end{remark}

\subsection{Outline} The paper is organized as follows. In Section \ref{s2}, we establish a radial lemma and derive the weighted Sobolev embedding results. Furthermore, we prove a weighted Adams-type inequality adapted to the variational framework of the problem. Section \ref{s4} is devoted to equation \eqref{P}, where we investigate the existence and regularity of weak solutions. Moreover, we establish a Pohozaev identity associated with problem \eqref{P} and study cases in which \eqref{P} does not admit nontrivial weak solutions. 

\section{Weighted embeddings and Adams-type inequality}\label{s2}

In this Section, we develop the basic theory for the space $E^a_{\mathrm{rad}}$, by establishing continuous and compact embeddings into weighted Lebesgue spaces, as well as a weighted Adams-type inequality. For this purpose, inspired by \cite{jianfu}, we prove the following Radial Lemma:

\begin{lemma}\label{lema-radial}
	Assume that $a>-4$. Then, for every $u \in E^a_{\mathrm{rad}}$,
	\begin{equation*}\label{eq2}
	 |u(|x|)| \leq  \frac{1}{\sqrt{\omega_3}(a+4)^{1/4}|x|^{(a+4)/4}}\left[\int_{\mathbb{R}^4}\left(|\Delta u|^2+|x|^au^2\right)\dx\right]^{1/2}, \quad x \neq 0,	
	\end{equation*} 
	 where \(\omega_3\) is the area of the sphere in \(\mathbb{R}^4\). 
\end{lemma}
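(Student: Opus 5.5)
The plan is to prove the estimate first for radial $u\in C_0^\infty(\mathbb{R}^4)$ and then pass to all of $E^a_{\mathrm{rad}}$ by density. For the density step I will check that radial test functions are dense in $E^a_{\mathrm{rad}}$, by averaging an approximating sequence over $\mathcal{O}(4)$. Convergence in $\|\cdot\|_a$ gives, along a subsequence, convergence a.e. and locally uniformly on $\mathbb{R}^4\setminus\{0\}$, which transfers the pointwise bound.

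Before computing anything I use scaling to decide the intermediate inequality. Put $u_\lambda(x)=u(\lambda x)$. In dimension four $\int|\Delta u_\lambda|^2\dx=\int|\Delta u|^2\dx$ is invariant, while $\int|x|^a u_\lambda^2\dx=\lambda^{-(a+4)}\int|x|^au^2\dx$, and the left-hand quantity $r^{(a+4)/2}u(r)^2$ scales like $\lambda^{-(a+4)/2}$. So the natural target is the multiplicative inequality
\[
r^{(a+4)/2}u(r)^2\le \frac{2}{\omega_3\sqrt{a+4}}\Big(\int_{\mathbb{R}^4}|\Delta u|^2\dx\Big)^{1/2}\Big(\int_{\mathbb{R}^4}|x|^au^2\dx\Big)^{1/2}.
\]
The stated lemma then follows from $2AB\le A^2+B^2$ and taking square roots, which produces exactly the factor $\omega_3^{-1/2}(a+4)^{-1/4}$.

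To obtain the multiplicative inequality I write, in radial coordinates, $\Delta u=s^{-3}(s^3u')'$. Since $u$ has compact support, $s^3u'(s)=-\int_s^\infty t^3\Delta u(t)\,dt$. Then, for the squared function,
\[
u(r)^2=-\int_r^\infty 2u(s)u'(s)\,ds .
\]
Substituting the flux representation and using Fubini turns the right-hand side into an integral of $\Delta u(t)$ against $u(s)$, integrated over the region $r\le s\le t$ with an explicit kernel. The aim is to insert the factors $s^{a/2}$ and $s^{-a/2}$ and use the monotone weight $s\ge r$ to pull out $r^{-(a+4)/2}$. A Cauchy--Schwarz inequality in the measures $t^3\,dt$ and $s^3\,ds$ should then bound the result by $\|\Delta u\|_{L^2}\,\||x|^{a/2}u\|_{L^2}/\omega_3$. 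The factor $1/\omega_3$ comes from $\int_{\mathbb{R}^4}=\omega_3\int_0^\infty(\cdot)\,s^3ds$. The factor $(a+4)^{-1/2}$ should come from an explicit one-dimensional integral such as $\int_r^\infty s^{-(a+4)-1}\,ds\sim r^{-(a+4)}/(a+4)$, taken under a square root. This is where $a>-4$ is used.

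The main obstacle is this last step. Handled naively, the double integral gives $\int_r^\infty s^3|\Delta u|\,ds$, which is not controlled by the $L^2$ norm. The kernel has to be split so that each Cauchy--Schwarz factor is exactly an $L^2$ norm and the remaining explicit integral converges and gives precisely $(a+4)^{-1/2}$. If that bookkeeping fails, I would first try the alternative of differentiating $\Psi(s)=s^{(a+4)/2}u(s)^2$ directly and integrating from $r$ to $\infty$. I would then estimate the $u'$ term through the same flux identity, even if this route only gives the lemma with a non-optimal constant.
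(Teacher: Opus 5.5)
Your outer structure is sound. The scaling heuristic is correct, and your multiplicative inequality with constant $2/(\omega_3\sqrt{a+4})$ does give the lemma after $2AB\le A^2+B^2$. The $\mathcal{O}(4)$-averaging, together with applying the smooth estimate to $u_n-u_m$, handles density.

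The gap is the core one-dimensional estimate. You leave it open, and the mechanism you propose for it cannot close. Writing the flux from infinity, $s^3u'(s)=-\int_s^\infty t^3\Delta u(t)\,dt$, and applying Fubini on $r\le s\le t$ gives
\[
u(r)^2=2\int_r^\infty t^3\,\Delta u(t)\,K(t)\,dt,\qquad K(t):=\int_r^t u(s)\,s^{-3}\,ds .
\]
Cauchy--Schwarz in $t^3\,dt$ then needs control of $\int_r^\infty t^3K(t)^2\,dt$. But $K(t)$ tends to the constant $\int_r^\infty u(s)s^{-3}\,ds$, which is generally nonzero, while $t^3$ is not integrable at infinity. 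So this factor is finite only because of the compact support, and it is not bounded by $r^{-(a+4)}\int_{\mathbb{R}^4}|x|^au^2\dx$ uniformly.

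Splitting the kernel does not fix this by itself. Subtracting the limit constant produces the boundary term $-2r^3u'(r)\int_r^\infty u(s)s^{-3}\,ds$, and $r^3u'(r)=-\int_r^\infty t^3\Delta u\,dt$ is exactly the uncontrolled quantity you flagged. Your fallback via $\Psi(s)=s^{(a+4)/2}u(s)^2$ is worse. Putting the weight inside the integral leads, even with a bound $|u'|\lesssim s^{-1}$, to the divergent factor $\int_r^\infty s^{-1}\,ds$ after Cauchy--Schwarz against $\int s^{a+3}u^2\,ds$.

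The missing idea is to take the flux from the origin. For smooth radial $u$ one has $s^3u'(s)\to0$ as $s\to0^+$, so $r^3u'(r)=\int_0^r s^3\Delta u(s)\,ds$; equivalently, the total flux $\int_0^\infty t^3\Delta u\,dt$ vanishes. Cauchy--Schwarz is now against the finite quantity $\int_0^r s^3\,ds=r^4/4$, giving
\[
|u'(r)|\le \frac{1}{2\sqrt{\omega_3}}\,r^{-1}\,\|\Delta u\|_{L^2(\mathbb{R}^4)},\qquad r>0 .
\]
Insert this into $u(r)^2\le 2\int_r^\infty|u||u'|\,ds$ and apply H\"older with the weights $s^{a+3}$ and $s^{-a-5}$. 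This is where $a>-4$ enters, through $\int_r^\infty s^{-a-5}\,ds=r^{-(a+4)}/(a+4)$. The result is
\[
u(r)^2\le \frac{r^{-(a+4)/2}}{\omega_3\sqrt{a+4}}\,\|\Delta u\|_{L^2(\mathbb{R}^4)}\Big(\int_{\mathbb{R}^4}|x|^au^2\dx\Big)^{1/2},
\]
which is your multiplicative inequality with half your constant; this is the paper's argument.

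Your Fubini formulation also works once the origin representation is used. The region becomes $0\le t\le s$, $s\ge r$, and the kernel becomes $\int_{\max\{r,t\}}^\infty u(s)s^{-3}\,ds$. This kernel is square-integrable against $t^3\,dt$, and the computation gives the same constant.
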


\begin{proof}
	By a standard density argument, it suffices to prove for functions $u \in C^{\infty}_{0,\mathrm{rad}}(\mathbb{R}^4).$ Using $r^3\Delta u(r) = (r^3 u'(r))'$ and H\"older's inequality, we have that
$$
	r^3 |u'(r)| \leq \int_{0}^{r}|\Delta u(s)|s^3 \, \mathrm{d}s \leq \left(\int_{0}^{r} s^3 \, \mathrm{d}s\right)^{1/2}\left(\int_{0}^{r}  |\Delta u(s)|^2s^3\, \mathrm{d}s\right)^{1/2}.
	$$
As a consequence,
	$$
	r^3 |u'(r)| \leq \frac{1}{2\sqrt{\omega_3}} r^{2}\|\Delta u\|_{L^2(\mathbb{R}^4)}.
	$$
Thus,
    \begin{equation}\label{decaimento-derivada}
		 |u'(r)| \leq \frac{1}{2\sqrt{\omega_3}}  r^{-1}\|\Delta u\|_{L^2(\mathbb{R}^4)}, \quad  \forall \, r>0.
	\end{equation}
This inequality implies that
\[
u^2(r)
=
-2\int_r^{\infty}u(s)u'(s)\,\mathrm{d}s
\leq
\frac{1}{\sqrt{\omega_3}}
\|\Delta u\|_{L^2(\mathbb{R}^4)}
\int_r^{\infty}|u(s)|\,s^{-1}\,\mathrm{d}s.
\]
Using Hölder's inequality, we get
\[
\begin{aligned}
\int_r^{\infty}|u(s)|\,s^{-1}\,\mathrm{d}s
&\leq
\left(
\int_r^{\infty}u^2(s)s^{a+3}\,\mathrm{d}s
\right)^{1/2}
\left(
\int_r^{\infty}s^{-a-5}\,\mathrm{d}s
\right)^{1/2}  
\\
& \leq
\frac{r^{-(a+4)/2}}
{\sqrt{\omega_3(a+4)}}
\|u\|_{L^2(\mathbb{R}^4,|x|^a)}.
\end{aligned}
\]
These estimates infer that
\[
u^2(r) \leq \frac{r^{-(a+4)/2}}{\omega_3\sqrt{a+4}}\|u\|_a^2.
\]
Therefore,
$$
|u(r)| \leq \frac{1}{\sqrt{\omega_3}(a+4)^{1/4}}\frac{\|u\|_a}{r^{(a+4)/4}},
$$
which finishes the proof.
\end{proof}

We recall the Gagliardo-Nirenberg interpolation inequality, see \cite{Gagliardo,Nirenberg}.

\begin{theorem}\label{GNS} Let $1\leq k_1,k_2,k_3\leq \infty$, let $N,j,m \in \mathbb{N}$ with $j<m$, and let $\theta \in [0,1]$ satisfy
	$$
	\frac{1}{k_1} = \frac{j}{N} + \left(\frac{1}{k_3} - \frac{m}{N}\right)\theta + \frac{1-\theta}{k_2} \quad \mbox{and} \quad \frac{j}{m} \leq \theta \leq 1.
	$$
	Then, there exists $C>0$, independent of $u$, such that
	$$
	\|D^ju\|_{L^{k_1}(\mathbb{R}^N)} \leq C\|D^m u\|_{L^{k_3}(\mathbb{R}^N)}^\theta\|u\|_{L^{k_2}(\mathbb{R}^N)}^{1-\theta}, \quad \forall \, u \in L^{k_2}(\mathbb{R}^N) \cap W^{m,k_3}(\mathbb{R}^N).
	$$
\end{theorem}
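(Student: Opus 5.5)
We do not reprove this result from scratch. It is the classical Gagliardo--Nirenberg inequality, and the plan follows Nirenberg's original scheme \cite{Nirenberg}. First, by density of $C_0^\infty(\mathbb{R}^N)$ in $L^{k_2}(\mathbb{R}^N)\cap W^{m,k_3}(\mathbb{R}^N)$ and Fatou's lemma, it suffices to treat $u\in C_0^\infty(\mathbb{R}^N)$. Next, the relation among $k_1,k_2,k_3,\theta$ is exactly the one forced by the scaling $u_\lambda(x)=u(\lambda x)$. Indeed, both sides transform by the same power of $\lambda$ precisely when
\[
\frac{1}{k_1} = \frac{j}{N} + \left(\frac{1}{k_3} - \frac{m}{N}\right)\theta + \frac{1-\theta}{k_2}.
\]
Hence the multiplicative form follows once we have the additive form $\|D^ju\|_{L^{k_1}}\le C(\|D^mu\|_{L^{k_3}}+\|u\|_{L^{k_2}})$: we optimize over $\lambda$.

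The argument then splits into the two endpoint cases for $\theta$, followed by interpolation between them.
\begin{enumerate}
\item[(1)] The endpoint $\theta=1$. Here $\|D^ju\|_{L^{k_1}}\le C\|D^mu\|_{L^{k_3}}$ with $\frac1{k_1}=\frac1{k_3}-\frac{m-j}{N}$. This is the Sobolev inequality applied $m-j$ times to $D^ju$. Each step is Gagliardo's $L^1$ estimate $\|v\|_{L^{N/(N-1)}}\le C\|\nabla v\|_{L^1}$, obtained from the one-dimensional fundamental theorem of calculus and the generalized H\"older inequality over the $N$ coordinate directions. Applying it to $|v|^\gamma$ then gives the $L^{k}$ versions. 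When $k_1=\infty$, we use Morrey's inequality instead.
\item[(2)] The endpoint $\theta=j/m$. Here $\frac{1}{k_1}=\frac{j}{m}\frac1{k_3}+(1-\frac jm)\frac1{k_2}$, and we need the pure derivative interpolation
\[
\|D^ju\|_{L^{k_1}}\le C\|D^mu\|_{L^{k_3}}^{j/m}\|u\|_{L^{k_2}}^{1-j/m}.
\]
We prove this first for $j=1$, $m=2$: integrate $\int |\partial_i u|^{k_1}$ by parts, writing $\partial_i u\,|\partial_iu|^{k_1-2}\partial_iu=\partial_i(u)\cdots$, and apply H\"older. In one dimension this is elementary; for general $N$ we integrate over lines. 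The general $(j,m)$ case then follows by induction, iterating the $j=1$, $m=2$ estimate on consecutive derivatives and combining the resulting inequalities.
\item[(3)] Intermediate $\theta\in(j/m,1)$. We interpolate between (1) and (2) via H\"older's inequality in the exponent $1/k_1$, which is affine in $\theta$, and absorb the terms using Young's inequality plus scaling.
\end{enumerate}

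The main obstacle is step (1) at borderline exponents. When $1<k_3<\infty$ and $m-j-N/k_3$ is a nonnegative integer, the case $\theta=1$ is false; the standard counterexample is $\log\log$-type functions when $k_1=\infty$. The statement as recalled here omits this exceptional case. So in writing the proof we would either exclude it explicitly or note that, in the paper's applications, $\theta$ is taken strictly less than $1$. Then only the interpolation part, steps (2) and (3), is used, and it holds without exception.
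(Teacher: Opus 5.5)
Your diagnosis of the statement is right. The paper gives no proof; it only cites Gagliardo and Nirenberg. As recalled, the statement drops Nirenberg's exceptional clause. For instance, $N=2$, $j=1$, $m=2$, $k_3=2$, $\theta=1$ gives $k_1=\infty$ and the false bound $\|\nabla u\|_{L^\infty(\mathbb{R}^2)}\le C\|D^2u\|_{L^2(\mathbb{R}^2)}$.

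Your plan, however, has a structural gap. Step (3) reaches intermediate $\theta$ only by H\"older interpolation between (2) and the $\theta=1$ endpoint (1). But (1) exists only when $k_3<N/(m-j)$, or when $k_3=1$ and $m-j=N$.
\begin{itemize}
\item In the exceptional case $k_3=N/(m-j)>1$ that endpoint is false. So (2)--(3) do \emph{not} hold without exception: they give nothing for $\theta\in(j/m,1)$.
\item When $k_3>N/(m-j)$, $\theta=1$ would force $1/k_1<0$. The top of the admissible range is then some $\theta_*<1$ with $k_1=\infty$, and nothing in (1)--(3) proves that estimate. For example, $N=1$, $j=1$, $m=2$, $k_2=k_3=2$ requires $\|u'\|_{L^\infty(\mathbb{R})}\le C\|u''\|_{L^2}^{3/4}\|u\|_{L^2}^{1/4}$.
\item Morrey is invoked in the wrong place. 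At $\theta=1$, $k_1=\infty$ forces $k_3=N/(m-j)$, so the last iterated step would be $\dot W^{1,N}\hookrightarrow L^\infty$, which is false for $N\ge2$. In the true subcase $k_3=1$, $m-j=N$, you need the representation $v(x)=\int_{-\infty}^{x_1}\!\cdots\!\int_{-\infty}^{x_N}\partial_1\cdots\partial_N v$ instead.
\end{itemize}

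The missing ingredient is an upper endpoint proved independently of $\theta=1$. In Nirenberg's argument this is the case $j=0$, $m=1$ with exponent $a<1$. It comes from Gagliardo's $L^1$ inequality applied to $|v|^\gamma$. For $L^\infty$ endpoints it uses a local Morrey estimate on a ball of radius $\rho$, with the mean bounded by $\|v\|_{L^q}$, optimized in $\rho$. This case is then applied to derivatives of $u$ and combined with interpolations of type (2).

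Two smaller points:
\begin{itemize}
\item The integration by parts in (2) works only for $k_1\ge2$. For $k_1<2$ the factor $|\partial_i u|^{k_1-2}$ has a negative exponent and H\"older cannot be used; this is why Nirenberg uses a one-dimensional lemma on intervals.
\item $C_0^\infty$ is not norm-dense when $k_2$ or $k_3$ equals $\infty$, so your density reduction needs a different argument there.
\end{itemize}

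Your way out also rests on a wrong guess about the application. The paper uses the theorem exactly at $\theta=1$, with $k_3=2$, $j=1$, $m=2$. This gives $\|\nabla u\|_{L^{2^*}(\mathbb{R}^N)}\le C\|D^2u\|_{L^2(\mathbb{R}^N)}$ for $u\in C_0^\infty(\mathbb{R}^N)$, used only with $N=4$, $2^*=4$. This is legitimate because $m-j-N/k_3=-1<0$. It is just the Sobolev inequality for $\nabla u$, i.e.\ your step (1) in its non-borderline regime. The paper's general-$N$ computation is valid for $N\ge3$, and $N=2$ is precisely the exceptional case.

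So the correct repair is to restore Nirenberg's clause in the statement: if $1<k_3<\infty$ and $m-j-N/k_3$ is a nonnegative integer, only $j/m\le\theta<1$ is allowed. Then note that the paper's use is unaffected, rather than claiming that $\theta=1$ is never needed.
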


By choosing $k_3=2, j=1, m=2$, and $\theta=1$ in Theorem~\ref{GNS}, we obtain
$$
\frac{1}{k_1} = \frac{1}{N} + \frac{1}{2} - \frac{2}{N} = -\frac{1}{N} + \frac{1}{2} = \frac{N-2}{2N},
$$
which implies that $k_1 = 2^*:=2N/(N-2)$. For $u \in C_0^\infty(\mathbb{R}^N)$, integration by parts gives
\[
\int_{\mathbb{R}^N}(u_{x_ix_j})^2\, \mathrm{d}x =- \int_{\mathbb{R}^N}u_{x_i}(u_{x_ix_j})_{x_j}\, \mathrm{d}x = - \int_{\mathbb{R}^N}u_{x_i}(u_{x_jx_j})_{x_i}\, \mathrm{d}x = \int_{\mathbb{R}^N}u_{x_ix_i}u_{x_jx_j}\,\mathrm{d}x.
\]
Thus, we deduce
\begin{equation}\label{laplaciano-derivada-D2}
\begin{aligned}
\int_{\mathbb{R}^N}|D^2 u|^2\, \mathrm{d}x &= \int_{\mathbb{R}^N}\sum_{i,j=1}^N(u_{x_ix_j})^2\, \mathrm{d}x=\int_{\mathbb{R}^N}\left(\sum_{i=1}^N u_{x_ix_i}\right)\left(\sum_{j=1}^N u_{x_jx_j}\right)\, \mathrm{d}x 
\\
&= \int_{\mathbb{R}^N}|\Delta u|^2\, \mathrm{d}x.    
\end{aligned}    
\end{equation}
Therefore,
\begin{equation}\label{des-GNS-2**}
	\|\nabla u\|_{L^{2^*}(\mathbb{R}^N)} \leq C \|D^2 u\|_{L^2(\mathbb{R}^N)}= C \|\Delta u\|_{L^{2}(\mathbb{R}^N)}, \quad \forall \, u \in C_0^\infty(\mathbb{R}^N). 
\end{equation}

We are now ready to prove the following result.


\begin{lemma}\label{lema-imersao-local}
	Assume that $a>-4$. Then, for each $R>0$, the embedding $E^a \hookrightarrow H^2 (B_R)$ is continuous. In particular, $E^a \hookrightarrow L^t (B_R)$ continuously for every \(1\leq t<\infty\).
\end{lemma}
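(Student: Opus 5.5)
By density, it suffices to show that there is a constant $C=C(R,a)>0$ such that
\[
\|u\|_{H^2(B_R)}\leq C\|u\|_a, \quad \forall\, u\in C_0^\infty(\mathbb{R}^4).
\]
Once this holds, the estimate extends to all of $E^a$. The $L^t$ statement then follows from the Sobolev embedding $H^2(B_R)\hookrightarrow L^t(B_R)$ for every $1\le t<\infty$, which is the borderline case $N=4$, $m=p=2$.

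We must control three pieces of the $H^2(B_R)$ norm: $\|D^2u\|_{L^2(B_R)}$, $\|\nabla u\|_{L^2(B_R)}$ and $\|u\|_{L^2(B_R)}$.

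\textbf{Second derivatives.} By \eqref{laplaciano-derivada-D2}, $\|D^2u\|_{L^2(B_R)}\le \|D^2 u\|_{L^2(\mathbb{R}^4)}=\|\Delta u\|_{L^2(\mathbb{R}^4)}\le\|u\|_a$.

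\textbf{Gradient.} By \eqref{des-GNS-2**} with $N=4$, so that $2^*=4$, and Hölder's inequality on the bounded ball,
\[
\|\nabla u\|_{L^2(B_R)}\le |B_R|^{1/4}\|\nabla u\|_{L^4(\mathbb{R}^4)}\le C|B_R|^{1/4}\|\Delta u\|_{L^2(\mathbb{R}^4)}.
\]

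\textbf{The $L^2$ term.} This is the main obstacle, because the weight $|x|^a$ can degenerate at the origin when $a>0$, or be large there when $a<0$. I would handle it by splitting $B_R$.

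On the annulus $A=B_R\setminus B_{1}$ (taking $R>1$ without loss of generality), the weight $|x|^a$ is bounded below by $\min\{1,R^a\}>0$. Hence
\[
\int_A u^2\dx\le \max\{1,R^{-a}\}\int_A|x|^au^2\dx .
\]
If $a\le 0$, the same lower bound $|x|^a\ge R^{a}$ holds on all of $B_R$, and we are already done.

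If $a>0$, the inner ball $B_1$ needs a Poincaré-type estimate. I would use the standard inequality
\[
\|u\|_{L^2(B_1)}\le C\big(\|\nabla u\|_{L^2(B_2)}+\|u\|_{L^2(B_2\setminus B_1)}\big).
\]
This follows, for instance, by a compactness argument, since a function on $B_2$ with zero gradient and vanishing on the annulus vanishes identically. Alternatively, one can integrate along rays from points of the annulus. Applying this with $B_R$ in place of $B_2$, the right-hand side is controlled by the gradient estimate and the annulus estimate above. Combining all three pieces yields $\|u\|_{H^2(B_R)}\le C\|u\|_a$.

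Another option is the pointwise bound
\[
|u(x)|\le |u(y)|+\int_0^1|\nabla u(y+s(x-y))||x-y|\,\mathrm{d}s,
\]
averaged over $y$ in the annulus, which gives the same conclusion directly.
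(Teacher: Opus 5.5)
Your proof is correct and is essentially the paper's argument: you bound the second derivatives via $\|D^2u\|_{L^2}=\|\Delta u\|_{L^2}$, the gradient via \eqref{des-GNS-2**} and H\"older, and the $L^2$ term via a Poincar\'e-type bound combining the gradient with the $L^2$ norm on an annulus away from the origin, where $|x|^a$ is bounded below. The paper does the last step uniformly in $a$ with a cutoff $\phi$ and Poincar\'e in $H^1_0(B_{R+1})$, in place of your case split and compactness argument; the one point you pass over quickly is that your density step needs the $H^2(B_R)$-limit of the approximating sequence to coincide with $u$, which holds because convergence in $L^2(\mathbb{R}^4,|x|^a)$ gives a.e.\ convergence along a subsequence, exactly as in the paper's Step 1.
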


\begin{proof}

Fix \(u \in E^a\) and \(R>0\). We divide the proof into three steps:

\bigskip

\noindent\underline{Step 1:} \(u\in L^2(B_R)\) and there exists \(C_1>0\) such that
    \[
    \int_{B_R}u^2\dx \leq C_1\|u\|_a^2.
    \]


In fact, by definition of \(E^a\), we can consider $(u_{n})_n\subset C^{\infty}_{0}(\mathbb{R}^{4})$ satisfying $\|u_{n}-u\|_{a}\rightarrow0$, as $n\rightarrow\infty$. Next, let $\phi\in C^{\infty}_{0}(\mathbb{R}^{4})$ be such that $\mathrm{supp}(\phi)\subset B_{R+1}$, $\phi\equiv 1$ in $B_{R}$, and $0\leq \phi\leq 1$. Hence, $(u_{n}-u_{m})\phi\in H^{1}_{0}(B_{R+1})$. In view of Poincar\'e inequality,
$$
	\int_{B_R}|u_{n}-u_{m}|^{2}\,\mathrm{d}x  \leq \int_{B_{R+1}}|(u_{n}-u_{m})\phi|^{2}\,\mathrm{d}x \leq C_2\int_{B_{R+1}}|\nabla\left((u_{n}-u_{m})\phi\right)|^{2}\,\mathrm{d}x.
	$$
As a consequence,
	$$
		\int_{B_R}|u_{n}-u_{m}|^{2}\,\mathrm{d}x   \leq C_{3} \int_{B_{R+1}}|\nabla u_{n}-\nabla u_{m}|^{2}\phi^{2}\,\mathrm{d}x + C_{3} \int_{B_{R+1}\backslash B_{R}}|u_{n}-u_{m}|^{2}|\nabla \phi|^{2}\,\mathrm{d}x.
	$$
   Since $0\leq \phi\leq 1$, we can conclude that
   \begin{equation}\label{des-u_n-L2}
    \begin{aligned}
		\int_{B_R}|u_{n}-u_{m}|^{2}\,\mathrm{d}x
		&\leq C_{3} \int_{B_{R+1}}|\nabla u_{n}-\nabla u_{m}|^{2}\,\mathrm{d}x \\ 
        &\quad+  \frac{C_{3}\|\nabla\phi\|_{L^\infty(\mathbb{R}^4)}^2}{\inf_{B_{R+1}\backslash B_{R}} |x|^{a}} \int_{B_{R+1}\backslash B_{R}}|x|^{a}|u_{n}-u_{m}|^{2}\,\mathrm{d}x.
		\end{aligned}   
   \end{equation}
    Moreover, since $(u_{n})_n$ is Cauchy in $E^a$, it follows from \eqref{des-GNS-2**} that
\begin{equation}\label{cauchy}
		\|\nabla u_{n}-\nabla u_{m}\|_{L^{2^*}(\mathbb{R}^4)}
    \leq C \|\Delta u_{n}-\Delta u_{m}\|_{L^2(\mathbb{R}^4)}\rightarrow0, \quad m,n\rightarrow\infty,
	\end{equation}
    where $2^*=4$. Thus, in view of \eqref{cauchy} and H\"older's inequality, we deduce
    \begin{equation}\label{cauchy-derivada-bola}
        \begin{aligned}
		\int_{B_{R+1}}|\nabla u_{n}-\nabla u_{m}|^{2}\,\mathrm{d}x & \leq |B_{R+1}|^{\frac{2^{*}-2}{2^{*}}}\left(\int_{B_{R+1}}|\nabla u_{n}-\nabla u_{m}|^{2^{*}}\,\mathrm{d}x\right)^{2/2^{*}}
        \\
		&\leq |B_{R+1}|^{\frac{2^{*}-2}{2^{*}}}C^2 \|\Delta u_{n}-\Delta u_{m}\|_{L^2(\mathbb{R}^4)}^{2}
        \\
        &\leq |B_{R+1}|^{\frac{2^{*}-2}{2^{*}}}C^2 \|u_{n}- u_{m}\|_{a}^{2}\rightarrow 0,
	\end{aligned}
    \end{equation}
	as $m,n\rightarrow\infty$. Here, $|B_{R+1}|$ denotes the Lebesgue measure of the ball. This estimate, combined with  \eqref{des-u_n-L2}, implies that
\begin{equation}\label{ime}
		\int_{B_R}|u_{n}-u_{m}|^{2}\,\mathrm{d}x \leq C_4\|u_{n}-u_{m}\|_{a}^{2}\rightarrow 0,
	\end{equation}
	as $n,m\rightarrow\infty$. Thus, $(u_{n})_n$ is Cauchy in $L^{2}(B_{R})$, and so \(u_{n}\rightarrow v\) in \(L^{2}(B_{R})\). Hence, up to a subsequence, $u_{n}(x)\rightarrow v(x)$ a.e. in $B_{R}$. On the other hand, since $\|u_{n}-u\|_{a}\rightarrow0$ as $n\rightarrow\infty$, it follows, after passing to a subsequence, that \(|x|^{a/2}(u_n(x)-u(x))\to 0\) a.e. in \(\mathbb{R}^4\). Consequently, $u_{n}(x)\rightarrow u(x)$ a.e. in $\mathbb{R}^4$, implying \(u=v\) in \(B_R\). So, \(u \in L^2(B_R)\). Moreover, in view of \eqref{ime}, we obtain 
	\begin{equation*}\label{ime2}
		\int_{B_{R}}u^{2}\,\mathrm{d}x \leq \liminf_{n\rightarrow\infty}\int_{B_{R}}u_{n}^{2}\,\mathrm{d}x \leq C_5\liminf_{n\rightarrow\infty}\|u_{n}\|_{a}^{2} = C_5\|u\|_{a}^{2},
	\end{equation*}	
    completing this step.

    \bigskip
  	
  	\noindent\underline{Step 2:} There exists \(C_6>0\) such that
    \[
    \int_{B_R}|\nabla u|^2\dx \leq C_6\|u\|_a^2.
    \]


Applying \eqref{cauchy-derivada-bola}, we have that the sequence $\left({\partial u_{n}}/{\partial x_{i}}\right)_n$ is Cauchy in $L^{2}(B_{R})$, for each $i=1,2,3,4$. Hence, ${\partial u_{n}}/{\partial x_{i}}\rightarrow v_{i}$, as $n\rightarrow\infty$, in $L^{2}(B_{R})$, for each $i=1,2,3,4$. For $\psi\in C^{\infty}_{0}(B_R)$ arbitrary, there holds
\begin{equation}\label{derivada-fraca-un}
    \int_{B_R}u_{n}\frac{\partial \psi}{\partial x_{i}}\,\mathrm{d}x=-\int_{B_R}\frac{\partial u_{n}}{\partial x_{i}}\psi\,\mathrm{d}x.
\end{equation}
By H\"{o}lder’s inequality and the convergence \(u_{n}\rightarrow u\) in \(L^{2}(B_{R})\), we have that
\[
\left| \int_{B_R}(u_{n}-u)\frac{\partial \psi}{\partial x_{i}}\,\mathrm{d}x \right| \leq \|u_n-u\|_{L^2(B_R)}\left\|\frac{\partial \psi}{\partial x_{i}}\right\|_{L^2(B_R)}\to 0,
\]
as \(n\to\infty\). Similarly,
\[
\left| \int_{B_R}\left(\frac{\partial u_{n}}{\partial x_{i}}-v_i\right)\psi\,\mathrm{d}x \right|\to 0,
\]
as \(n\to\infty\). Combining these convergences with \eqref{derivada-fraca-un}, we obtain
\begin{equation}\label{vi-derivada-de-u}
 \int_{B_R}u\frac{\partial \psi}{\partial x_{i}}\,\mathrm{d}x=-\int_{B_R}v_i\psi\,\mathrm{d}x, 
\end{equation}
which implies that ${\partial u}/{\partial x_{i}}=v_{i}$, and consequently $|\nabla u|\in L^{2}(B_{R})$. In view of Fatou's Lemma and \eqref{cauchy-derivada-bola}, we deduce
\begin{equation}\label{ime1}
		\int_{B_{R}}|\nabla u|^{2}\,\mathrm{d}x \leq \liminf_{n\rightarrow\infty}\int_{B_{R}}|\nabla u_{n}|^{2}\,\mathrm{d}x \leq C_{7} \liminf_{n\rightarrow\infty}\|u_{n}\|_{a}^{2} = C_{7}\|u\|_{a}^{2}.
	\end{equation}	
    This concludes the second step.

\bigskip
  	
  	\noindent\underline{Step 3:} It holds that
    \[
    \int_{B_R}|D^2 u|^2\dx \leq \|u\|_a^2.
    \]


Indeed, from \eqref{laplaciano-derivada-D2}, the sequence $\left({\partial^2 u_{n}}/{\partial x_{i}\partial x_{j}}\right)_n$ is Cauchy in $L^{2}(\mathbb{R}^4)$, for each $i,j=1,2,3,4$. Thus, we can conclude that \({\partial^2 u_{n}}/{\partial x_{i}\partial x_{j}} \to w_{i,j}\) in $L^{2}(\mathbb{R}^4)$, as \(n\to \infty\), with $i,j=1,2,3,4$. So, given \(\varphi \in C^{\infty}_{0}(B_R)\), we have that
\[
	\int_{B_R}\frac{\partial^2 u_{n}}{\partial x_{i}\partial x_{j}}\varphi\,\mathrm{d}x=\int_{B_R}u_{n}\frac{\partial^2 \varphi}{\partial x_{i}\partial x_{j}}\,\mathrm{d}x,
	\]
    for any $i,j=1,2,3,4$. Arguing as in the proof of \eqref{vi-derivada-de-u}, we obtain
\[
	\int_{B_R}w_{i,j}\varphi\,\mathrm{d}x=\int_{B_R}u\frac{\partial^2 \varphi}{\partial x_{i}\partial x_{j}}\,\mathrm{d}x,
	\]
    for any $i,j=1,2,3,4$. Therefore, \(w_{i,j} = {\partial^2 u}/{\partial x_{i}\partial x_{j}}\), for any $i,j=1,2,3,4$, and so
\[
    \begin{aligned}
        \int_{B_R}|D^2 u|^2\dx \leq 
        \liminf_{n\to \infty}\int_{\mathbb{R}^4}|D^2 u_n|^2\dx = \lim_{n\to \infty}\int_{\mathbb{R}^4}|\Delta u_n|^2\dx = \int_{\mathbb{R}^4}|\Delta u|^2\dx \leq \|u\|_a^2,
    \end{aligned}
    \]
where we used \eqref{laplaciano-derivada-D2}, finalizing this step.

    Combining the three steps completes the proof.
	\end{proof}

Now, we prove the main result of this section.

\begin{proof}[\textbf{Proof of Theorem \ref{Teoimersao}}]
		For any $R>0$ and $\gamma_{a,b}\leq p< \infty$, we write
		\begin{equation}\label{soma-bola-fora-da-bola}
			\int_{\mathbb{R}^4} |x|^b|u|^p \dx = \int_{B_R}  |x|^b|u|^p \dx + \int_{\mathbb{R}^4  \setminus B_R} |x|^b|u|^p \dx.
		\end{equation}
Since $b> -4,$ we can choose $p_1>1$ close enough to $1$ such that $|x|^{p_1b} \in L^1(B_R)$. Thus, using Hölder's inequality together with $E^a_{\mathrm{rad}} \hookrightarrow H^2(B_R) $ (see Lemma~\ref{lema-imersao-local}) and $H^2(B_R) \hookrightarrow L^{p_2p}(B_R)$, we get
\begin{equation}\label{Qbola}
			\int_{B_R} |x|^b|u|^p \dx \leq C_1 \left( \int_{B_R} |x|^{p_1 b} \dx\right)^{1/p_1} \left( \int_{B_R} |u|^{p_2p} \dx \right)^{1/p_2} \leq C_2 \|u\|^p_a.
		\end{equation}		

Next, we estimate the second integral on the right-hand side of \eqref{soma-bola-fora-da-bola}. Applying Lemma \ref{lema-radial}, one deduces
        $$
		\int_{\mathbb{R}^4  \setminus B_R}  |x|^b|u|^p \dx= \int_{\mathbb{R}^4  \setminus B_R}  |x|^b|u|^{p-2} u^2 \dx\leq C_3 \|u\|_a^{p-2} \int_{\mathbb{R}^4  \setminus B_R}  |x|^{\zeta}|x|^au^{2} \dx, 
		$$
where
		$$
		\zeta:=(b-a)-(p-2)\frac{(a+4)}{4}.
		$$
Since $a>-4$ and $p \geq \gamma_{a,b}$, it follows that $\zeta
\leq 0$. Thus,
\begin{equation}\label{Qcompl}
	\int_{\mathbb{R}^4  \setminus B_R}  |x|^{b}|u|^p \dx \leq C_4 R^{\zeta}\|u\|_a^{p}. 
\end{equation}
Therefore, in view of  \eqref{soma-bola-fora-da-bola}, \eqref{Qbola}, and \eqref{Qcompl}, we can conclude that the embedding  $E^a_{\mathrm{rad}} \hookrightarrow L^p(\mathbb{R}^4, |x|^b)$ is continuous for $\gamma_{a,b}\leq p< \infty$, completing the first part of the proof.

To prove compactness, let \((u_n)_n\) be a sequence in \(E^a_{\mathrm{rad}}\) such that \(u_n \rightharpoonup 0\) in \(E^a_{\mathrm{rad}}\). In light of Rellich-Kondrachov Theorem, it follows from \eqref{Qbola} that
\[
\lim_{n\to\infty}\int_{B_R}|x|^b|u_n|^p\dx =0.
\]
On the other hand, for \(\varepsilon>0\) arbitrary, if \(p>\gamma_{a,b}\), then we can choose \(R>0\) large in \eqref{Qcompl} such that
\[
\int_{\mathbb{R}^4  \setminus B_R}  |x|^{b}|u_n|^p \dx<\varepsilon.
\]
This, combined with the preceding convergence, completes the proof.
\end{proof}


Now, we focus on the proof of the weighted Adams-type inequality.

\begin{proof}[\textbf{Proof of Theorem \ref{TM}}]
	Given $u \in E^a_\mathrm{rad}$ and $\alpha,R>0$, we write
    \begin{equation}\label{soma-exponencial-bola-complementar}
        \int_{\mathbb{R}^4}|x|^{b}\Phi_\alpha(u)\dx = \int_{B_R}|x|^{b}\Phi_\alpha(u)\dx + \int_{\mathbb{R}^4\setminus B_R}|x|^{b}\Phi_\alpha(u)\dx.
    \end{equation}
    By Lemma \ref{lema-imersao-local}, $u \in H^2(B_R)$. Hence,
    \begin{equation}\label{definicao-v}
    v(|x|):=u(|x|)-u(R) \in H^1_0(B_R) \cap H^2(B_R).    
    \end{equation}
    Since
	$$
	u^2 = (v+u(R))^2 \leq (1+\delta)v^2+C(\delta)u^2(R),
	$$
	for every $\delta>0$, one may deduce
    \[\int_{B_R}|x|^{b}\Phi_\alpha(u)\dx \leq \int_{B_R}|x|^{b}e^{\alpha u^2}\dx \leq e^{\alpha C(\delta)u^2(R)}	\int_{B_R}|x|^{b}e^{\alpha (1+\delta) v^2}\dx.
    \]
    Making the change of variables \(y=x/R\), we obtain
    \begin{equation}\label{estimativa-u-w}
        \int_{B_R}|x|^{b}\Phi_\alpha(u)\dx \leq e^{\alpha C(\delta)u^2(R)}R^{b+4}\int_{B_1}|y|^be^{\alpha(1+\delta)w^2}\,\mathrm{d}y,
    \end{equation}
    where \(w(y):=v(Ry) \in H^1_0(B_1) \cap H^2(B_1)\).

    If \(-4<b\leq0\), then we can apply \cite[Theorem 1.2]{Lam-Lu} to get 
\begin{equation*}
        \int_{B_R}|x|^{b}\Phi_\alpha(u)\dx<\infty.
    \end{equation*}
    Moreover, if \(\|u\|_a\leq 1\), then \(\|\Delta w\|_{L^2(B_1)} = \|\Delta v\|_{L^2(B_R)} =  \|\Delta u\|_{L^2(B_R)}\leq 1\). Applying \cite[Theorem 1.2]{Lam-Lu} once more to \eqref{estimativa-u-w}, we conclude that
    \[
    \int_{B_R}|x|^{b}\Phi_\alpha(u)\dx \leq e^{\alpha C(\delta)u^2(R)}R^{b+4}\sup_{\left\lbrace \omega \in H^1_0(B_1) \cap H^2(B_1) \ : \ \|\Delta\omega\|_{L^2(B_1)} \leq 1\right\rbrace }\int_{B_1}\frac{e^{\alpha(1+\delta)\omega^2}}{|y|^{-b}}\,\mathrm{d}y,
    \]
where we take \(\delta>0\) such that \(\alpha(1+\delta)<32\pi^2(1+b/4)\).

On the other hand, if \(b>0\), then we can use \cite[Theorem 1]{Calanchi-Tarsi} together with \eqref{estimativa-u-w}
 to conclude that 
 \[
 \int_{B_R}|x|^{b}\Phi_\alpha(u)\dx<\infty
 \]
 and   
\[\int_{B_R}|x|^{b}\Phi_\alpha(u)\dx \leq e^{\alpha C(\delta)u^2(R)}R^{b+4}\sup_{\left\lbrace \omega \in H^1_0(B_1) \cap H^2(B_1) \ : \ \|\Delta\omega\|_{L^2(B_1)} \leq 1\right\rbrace }\int_{B_1}|y|^be^{\alpha(1+\delta)\omega^2}\,\mathrm{d}y,
    \]
    whenever \(\|u\|_a \leq 1\) and $0<\alpha<32\pi^2(1+b/4)$. Moreover, by Lemma \ref{lema-radial}, for every \(u\in E_{\mathrm{rad}}^{a}\) with
\(\|u\|_{a}\leq 1\), we have
\[
|u(R)|
\leq C_{a}R^{-(a+4)/4}\|u\|_{a}
\leq C_{a}R^{-(a+4)/4}.
\] 
Consequently,
\[
\int_{B_R}|x|^{b}\Phi_\alpha(u)\dx \leq C_1,
\]
with \(C_1>0\) independent of \(u\). In both cases, we have that     \begin{equation}\label{integravel-bola}
       \int_{B_R}|x|^{b}\Phi_\alpha(u)\dx<\infty
    \end{equation}
     and
    \begin{equation}\label{TM-bola}
        \sup_{\left\lbrace u \in E^a_\mathrm{rad} \ : \ \|u\|_a \leq 1\right\rbrace }\int_{B_R}|x|^{b}\Phi_\alpha(u)\dx<\infty, \quad \mbox{if} \ \  0<\alpha<32\pi^2(1+b/4).
    \end{equation}

Next, we estimate the integral over \(\mathbb{R}^4 \setminus B_R\). To do this, we first write 
\[
\begin{aligned}
    \int_{\mathbb{R}^4\setminus B_R}|x|^{b}\Phi_\alpha(u)\dx = \sum_{j=j_0}^\infty \frac{\alpha^j}{j!} \int_{\mathbb{R}^4\setminus B_R}|x|^{b}|u|^{2j}\dx = \sum_{j=j_0}^\infty \frac{\alpha^j}{j!} \int_{\mathbb{R}^4\setminus B_R}|x|^{b}|u|^{2j-\gamma_{a,b}} |u|^{\gamma_{a,b}}\dx.
\end{aligned}
\]
Lemma \ref{lema-radial} yields
\[
|u|^{2j-\gamma_{a,b}} \leq  \frac{C_2^{2j-\gamma_{a,b}}}{|x|^{(a+4)(2j-\gamma_{a,b})/4}}\|u\|_a^{2j-\gamma_{a,b}} , \quad x \neq 0.
\]
Substituting this estimate into the preceding identity and applying Theorem \ref{Teoimersao}, we obtain 
\[
\begin{aligned}
    \int_{\mathbb{R}^4\setminus B_R}|x|^{b}\Phi_\alpha(u)\dx &\leq \left(\frac{C_2\|u\|_a}{R^{(a+4)/4}}\right)^{-\gamma_{a,b}}\sum_{j=j_0}^\infty \left[\frac{\alpha C_2^2\|u\|_a^2}{R^{(a+4)/2}}\right]^j\frac{1}{j!} \int_{\mathbb{R}^4\setminus B_R}|x|^{b}|u|^{\gamma_{a,b}}\dx
    \\
    &\leq C_3 e^{{\alpha C_2^2\|u\|_a^2}/{R^{(a+4)/2}}}<\infty.
\end{aligned}
\]
Applying this inequality and \eqref{integravel-bola} in \eqref{soma-exponencial-bola-complementar}, we obtain $\Phi_\alpha(u) \in L^1(\mathbb{R}^4,|x|^{b})$, for any \(\alpha>0\) and \(u\in E^a_{\mathrm{rad}}\). The preceding estimate also yields
\[
\sup_{\left\lbrace u \in E^a_\mathrm{rad} \ : \ \|u\|_a \leq 1\right\rbrace } \int_{\mathbb{R}^4\setminus B_R}|x|^{b}\Phi_\alpha(u)\dx \leq C_3 e^{{\alpha C_2^2}/{R^{(a+4)/2}}}.
\]
Combining this estimate with \eqref{TM-bola} and \eqref{soma-exponencial-bola-complementar} shows that \(\mathcal{S}(\alpha,a,b)<\infty\), whenever $0<\alpha<32\pi^2(1+b/4)$.

Finally, we show that \(\mathcal{S}(\alpha,a,b)=\infty\) when $\alpha>32\pi^2(1+b/4)$. To do this, let us consider the family of test functions introduced in \cite{Lu-Yang}. Namely, for \(0<\varepsilon<1\), we define
\[
u_{\varepsilon}(x):=
\begin{cases}
\displaystyle
\sqrt{\frac{-\log\varepsilon}{32\pi^{2}}}
-\frac{|x|^{2}}{\sqrt{-8\pi^{2}\varepsilon\log\varepsilon}}
+\frac{1}{\sqrt{-8\pi^{2}\log\varepsilon}},
& \text{if } |x|<\varepsilon^{1/4},
\\[0.4cm]
\displaystyle
\frac{-\log|x|}{\sqrt{-2\pi^{2}\log\varepsilon}},
& \text{if } \varepsilon^{1/4}\leq |x|\leq 1,
\\[0.4cm]
\eta_{\varepsilon},
& \text{if } |x|>1,
\end{cases}
\]
where \(\eta_{\varepsilon}\in C_{0}^{\infty}(B_{2})\) satisfies 
\[
\eta_{\varepsilon}(x)=0,
\quad
\frac{\partial\eta_{\varepsilon}}{\partial\nu}(x)
=
\frac{1}{\sqrt{-2\pi^{2}\log\varepsilon}},
\quad
\forall \, x\in\mathbb{R}^{4},\quad |x|=1,
\]
\[
\|\eta_{\varepsilon}\|_{L^\infty(\mathbb{R}^4)},
\quad
\|\nabla\eta_{\varepsilon}\|_{L^\infty(\mathbb{R}^4)},
\quad\text{and}\quad
\|\Delta\eta_{\varepsilon}\|_{L^\infty(\mathbb{R}^4)}
\]
are all
\[
O\left(\frac{1}{\sqrt{-\log\varepsilon}}\right),
\quad\text{as }\varepsilon\to0^{+}.
\]
Furthermore, in \cite{Lu-Yang} it was proved that \(u_\varepsilon \in H^1_0(B_1) \cap H^2(B_1)\) and 
\begin{equation}\label{laplaciano-L2-zero}
\|\Delta u_{\varepsilon}\|_{L^2(B_1)}^{2}
=
1+O\left(\frac{-1}{\log\varepsilon}\right).
\end{equation}
Now, we can estimate
\[
\begin{aligned}
    \int_{\mathbb{R}^{4}}|x|^au_{\varepsilon}^{2}\dx &\leq C_4(-\log\varepsilon)
\int_{0}^{\varepsilon^{1/4}}s^{a+3}\,\mathrm{d}s + \frac{C_4}{(-\varepsilon \log\varepsilon)}\int_{0}^{\varepsilon^{1/4}}s^{a+7}\,\mathrm{d}s 
\\
&\quad+ \frac{C_4}{(- \log\varepsilon)}\int_{0}^{\varepsilon^{1/4}}s^{a+3}\,\mathrm{d}s +\left(\frac{C_{4}}{-\log\varepsilon}\right)
\int_{\varepsilon^{1/4}}^{1}
s^{a+3}|\log s|^{2}\,\mathrm{d}s
\\
&\quad + C_4\|\eta_{\varepsilon}\|_{L^\infty(\mathbb{R}^4)}^{2}
\int_{1}^{2}s^{a+3}\,\mathrm{d}s.
\end{aligned}
\]
Note that
\[
(-\log\varepsilon)
\int_{0}^{\varepsilon^{1/4}}s^{a+3}\,\mathrm{d}s = C_5(-\log\varepsilon) \varepsilon^{(a+4)/4} \to 0,
\]
as \(\varepsilon\to 0^+\), because \(a>-4\). Similarly, we can conclude that
\[
\frac{1}{(-\varepsilon \log\varepsilon)}\int_{0}^{\varepsilon^{1/4}}s^{a+7}\,\mathrm{d}s \to 0 \quad \mbox{and} \quad \frac{1}{(- \log\varepsilon)}\int_{0}^{\varepsilon^{1/4}}s^{a+3}\,\mathrm{d}s \to 0,
\]
as \(\varepsilon\to 0^+\). On the other hand, using that 
\[
\int_0^ 1s^{a+3}|\log s|^{2}\,\mathrm{d}s<\infty,
\]
we get
\[
\left(\frac{C_4}{-\log\varepsilon}\right)
\int_{\varepsilon^{1/4}}^{1}
s^{a+3}|\log s|^2\,\mathrm{d}s
\to 0,
\]
as \(\varepsilon\to 0^+\). These convergences, together with the estimate \(\|\eta_{\varepsilon}\|_{L^\infty(\mathbb{R}^4)}=O\left({1}/{\sqrt{-\log\varepsilon}}\right)\) allow us to conclude that
\begin{equation}\label{convergencia-norma-zero}
    \int_{\mathbb{R}^{4}}|x|^au_{\varepsilon}^{2}\dx \to 0,
\end{equation}
as \(\varepsilon\to 0^+\). Now, noting that
\[
-\frac{|x|^{2}}{\sqrt{-8\pi^{2}\varepsilon\log\varepsilon}}
+\frac{1}{\sqrt{-8\pi^{2}\log\varepsilon}} \geq 0, \quad \mbox{if} \  \ |x|<\varepsilon^{1/4},
\]
defining \(
\widetilde{u}_{\varepsilon}
:=
{u_{\varepsilon}}/{\|u_{\varepsilon}\|_a}\), we obtain
\[
\widetilde{u}_{\varepsilon}(x)
\ge
\frac{\sqrt{\dfrac{-\log\varepsilon}{32\pi^2}}}
{\|u_{\varepsilon}\|_a},  \quad \mbox{if} \  \ |x|<\varepsilon^{1/4}.
\]
Thus, we can estimate
\[
\begin{aligned}
    \int_{\mathbb{R}^{4}} |x|^{b}\Phi_{\alpha}(\widetilde{u}_{\varepsilon})\dx
&\geq
\int_{B_{\varepsilon^{1/4}}}
|x|^{b}\Phi_{\alpha}(\widetilde{u}_{\varepsilon})\dx
\\
&=\int_{B_{\varepsilon^{1/4}}}
|x|^{b}e^{\alpha\widetilde{u}_{\varepsilon}^{2}}\dx
-
\sum_{j=0}^{j_{0}-1}
\frac{\alpha^{j}}{j!}
\int_{B_{\varepsilon^{1/4}}}
|x|^{b}
|\widetilde{u}_{\varepsilon}|^{2j}\dx.
\end{aligned}
\]
By \eqref{Qbola}, it follows that
\[
\int_{B_{\varepsilon^{1/4}}}
|x|^{b}
|\widetilde{u}_{\varepsilon}|^{2j}\dx \leq \int_{B_1}
|x|^{b}
|\widetilde{u}_{\varepsilon}|^{2j}\dx \leq C_6\|\widetilde{u}_{\varepsilon}\|_a^{2j}=C_6,
\]
for each fixed \(1\leq j\leq j_{0}-1\). If $j=0$, then we use the fact that $|x|^{b}\in L^{1}(B_{1})$. As a consequence,
\[
\begin{aligned}
    \int_{\mathbb{R}^4}|x|^{b}\Phi_\alpha(\widetilde{u}_{\varepsilon})\dx + C_7 &\geq \int_{B_{\varepsilon^{1/4}}}|x|^be^{{-\alpha\log\varepsilon}/
{32\pi^2\|u_{\varepsilon}\|^2}}\dx
\\
&=\frac{\omega_3}{b+4}e^{{-\alpha\log\varepsilon}/
{32\pi^2\|u_{\varepsilon}\|^2}}\varepsilon^{(b+4)/4}
\\
&=\frac{\omega_3}{b+4}\varepsilon^{(b+4)/4-\alpha/32\pi^2\|u_\varepsilon\|_a^2}.
\end{aligned}
\]
Since $\alpha>32\pi^2(1+b/4)$, it follows from \eqref{laplaciano-L2-zero} and \eqref{convergencia-norma-zero} that \((b+4)/4-\alpha/32\pi^2\|u_\varepsilon\|_a^2<0\), for \(\varepsilon>0\) sufficiently small. Hence
\[
\lim_{\varepsilon\to0^{+}}\int_{\mathbb{R}^4}|x|^{b}\Phi_\alpha(\widetilde{u}_{\varepsilon})\dx = \infty,
\]
whenever $\alpha>32\pi^2(1+b/4)$, completing the proof.
\end{proof}

Next, we prove the version of the weighted Adams-type inequality for the critical case. 

\begin{proof}[\textbf{Proof of Proposition \ref{caso-critico}}]
In fact, using Young’s inequality, we can estimate
    \[
    u^2 = v^2+ 2vu(R) + u^2(R) \leq v^2 + (vu(R))^{2} + 1 + u^2(R) = \widetilde{v}^2+1+u^2(R),
    \]
    where \(v\) is defined in \eqref{definicao-v} and
    \[
    \widetilde{v}:=\left(1+u^2(R)\right)^{1/2}v \in H^1_0(B_R) \cap H^2(B_R).
    \]
As a consequence,
\begin{equation}\label{est-exp-u-v}
\int_{B_R}|x|^{b}\Phi_\alpha(u)\dx \leq \int_{B_R}|x|^{b} e^{\alpha u^2}\dx \leq e^{\alpha(1+u^2(R))}\int_{B_R}|x|^{b} e^{\alpha \widetilde{v}^2}\dx.    
\end{equation}
Now, consider $\|u\|_a \leq 1$. Thus,
\begin{equation}\label{norma-laplaciano-v}
    \begin{aligned}
        \int_{B_R}|\Delta \widetilde{v}|^2\dx &= \left(1+u^2(R)\right)\int_{B_R}|\Delta {v}|^2\dx \\
        &= \left(1+u^2(R)\right)\int_{B_R}|\Delta {u}|^2\dx 
        \\
        &\leq \left(1+u^2(R)\right) \left(1-\|u\|_{L^2(\mathbb{R}^4,|x|^a)}^2 \right) 
        \\
        &\leq 1 - \|u\|_{L^2(\mathbb{R}^4,|x|^a)}^2 +u^2(R).
    \end{aligned}
\end{equation}
   Since \(u \in \mathcal{D}\), we have that 
   \[
\|u\|_{L^2(\mathbb{R}^4,|x|^a)}^2 \geq \int_{B_R}|x|^au^2\dx = 2\pi ^2\int_0^R s^au^2(s)s^3 \, \mathrm{d}s \geq \frac{2\pi^2}{(a+4)}u^2(R)R^{a+4},
\]
that is,
\[
u^2(R) \leq \frac{(a+4)}{2\pi^2 R^{a+4}}\|u\|_{L^2(\mathbb{R}^4,|x|^a)}^2.
\]
Choosing \(R>0\) large such that \((a+4)/2\pi^2 R^{a+4} \leq 1\), by \eqref{norma-laplaciano-v} we can conclude that
\[
\int_{B_R}|\Delta \widetilde{v}|^2\dx \leq 1.
\]
Defining \(\omega(y):=\widetilde{v}(Ry) \in H^1_0(B_1) \cap H^2(B_1)\), we get \(\|\Delta \omega\|_{L^2(B_1)} = \|\Delta \widetilde{v}\|_{L^2(B_R)} \leq 1\). On the other hand, from \eqref{est-exp-u-v}, we infer
\[
\int_{B_R}|x|^{b}\Phi_\alpha(u)\dx \leq C_1 \int_{B_1}|x|^{b}e^{\alpha \omega^2}\dx.
\]
Applying \cite[Theorem 1.2]{Lam-Lu} in the case \(-4<b \leq0\), and \cite[Theorem 1]{Calanchi-Tarsi} in the case \(b>0\), it follows that
\[
\sup_{\left\lbrace u \in \mathcal{D} \ : \ \|u\|_a \leq 1\right\rbrace }\int_{B_R}|x|^{b}\Phi_\alpha(u)\dx<\infty,
\]
for \(\alpha = 32\pi^2(1+b/4)\). The estimate over \(\mathbb{R}^4 \setminus B_R\) follows exactly as in the proof of Theorem \ref{TM}. This completes the proof.
\end{proof} 

\section{The problem \eqref{P}: existence, regularity, and nonexistence}\label{s4}

The proof of Theorem \ref{T1} is divided into three subsections. We first address the existence of solutions by establishing an appropriate version of the of the Principle of Symmetric Criticality and applying the Mountain Pass Theorem.

\subsection{Existence of weak solutions for Problem \eqref{P}}

The energy functional associated with \eqref{P} is given by
$$
I_\lambda(u)=\frac{1}{2}\|u\|_a^2-\lambda\int_{\mathbb{R}^4}|x|^{b}F(u)\,\mathrm{d}x, \quad  u \in E^a_{\mathrm{rad}}.
$$
To see that this functional is well-defined, note that, from \eqref{critical} and \ref{f1}, we can conclude that for each $\varepsilon > 0$, $\alpha > 32\pi^2(1+b/4)$, and $q \geq 1$, there exists \(C_f>0\) such that
\begin{equation}\label{des-F}
|f(s)|\le \varepsilon |s|^{\gamma_{a,b}-1}+C_f|s|^{q-1}\Phi_{\alpha}(s)
\ \ \text{and} \ \
|F(s)|\le \varepsilon |s|^{\gamma_{a,b}}+C_f|s|^{q}\Phi_{\alpha}(s),
\quad s\in\mathbb{R}.
\end{equation}
On the other hand, it follows from \cite[Lemma 2.1]{Yang} that
\begin{equation}\label{elementar}
\left[\Phi_{\alpha}(s) \right]^r\leq\Phi_{r\alpha}(s), \quad \mbox{for all} \ r\geq1.
\end{equation}
Combining this estimate with \eqref{des-F} (with $q \geq \gamma_{a,b}$), H\"{o}lder's inequality, and Theorems \ref{Teoimersao} and \ref{TM}, we obtain
\begin{equation}\label{est-xb-Fu}
\begin{aligned}
    \int_{\mathbb{R}^4}|x|^{b}F(u)\,\mathrm{d}x&\leq\varepsilon\|u\|_{L^{\gamma_{a,b}}(\mathbb{R}^4,|x|^{b})}^{\gamma_{a,b}}+C_f\|u\|_{L^{r_1q}(\mathbb{R}^4,|x|^{b})}^{q}\left(\int_{\mathbb{R}^4}|x|^{b}\Phi_{r_2\alpha}(u)\,\mathrm{d}x\right)^{1/r_2}
    \\
    &\leq \varepsilon C_1\|u\|_a^{\gamma_{a,b}} + C_1\|u\|_a^q\left(\int_{\mathbb{R}^4}|x|^{b}\Phi_{r_2\alpha}(u)\,\mathrm{d}x\right)^{1/r_2} <\infty,
\end{aligned}
\end{equation}
with $1/r_1 + 1/r_2 = 1$ and \(u \in E^a_{\mathrm{rad}}\). Therefore, \(I_\lambda\) is well defined. Finally, by standard arguments $I_\lambda\in C^1(E^a_{\mathrm{rad}},\mathbb{R})$ with 
	$$
    I_\lambda'(u)v=\int_{\mathbb{R}^4} \left(\Delta u \Delta v + |x|^{a}uv\right)\,\mathrm{d}x-\lambda\int_{\mathbb{R}^4}  |x|^{b}f(u)v\,\mathrm{d}x, \quad  u,v\in E^a_{\mathrm{rad}}.
	$$
It is important to note that, if \(u \in E^a_{\mathrm{rad}}\) is a critical point of \(I_\lambda\), then \(I'_\lambda(u)v=0\), for any \(v \in E^a_{\mathrm{rad}}\), but not necessarily this is valid in \(E^a \setminus E^a_{\mathrm{rad}}\), and so we cannot guarantee that \(u\) is a weak solution of \eqref{P}. Therefore, we establish below a suitable version of the Principle of Symmetric Criticality (see \cite{Palais,Willem}).

\begin{lemma}\label{criticalidade} 
	Suppose that $a,b>-4$, \eqref{critical}, and \ref{f1} hold. If $u\in E^a_{\mathrm{rad}}$ is a critical point of $I_\lambda$, then $u$ is a weak solution of \eqref{P}.
\end{lemma}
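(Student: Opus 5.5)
Following Palais \cite{Palais} and Willem \cite{Willem}, the natural route is to split an arbitrary test function $\varphi\in C_0^\infty(\mathbb{R}^4)$ into its radial average and a remainder, and show that each piece is harmless. Fix $u\in E^a_{\mathrm{rad}}$ with $I_\lambda'(u)=0$ on $E^a_{\mathrm{rad}}$ and fix $\varphi\in C_0^\infty(\mathbb{R}^4)$. We need
\[
L(\varphi):=\int_{\mathbb{R}^4}\left(\Delta u\Delta\varphi+|x|^a u\varphi\right)\dx-\lambda\int_{\mathbb{R}^4}|x|^b f(u)\varphi\dx=0.
\]

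\textbf{Well-definedness of $L$.} For $\varphi$ with support in $B_R$, the first integral is finite because $\Delta u\in L^2$ and $|x|^{a/2}u\in L^2$. For the second integral, \eqref{des-F} gives
\[
|f(u)|\le \varepsilon|u|^{\gamma_{a,b}-1}+C_f|u|^{q-1}\Phi_\alpha(u).
\]
Since $|x|^b\in L^{p_1}(B_R)$ for some $p_1>1$, we can combine H\"older's inequality, Lemma \ref{lema-imersao-local}, \eqref{elementar} and Theorem \ref{TM}, which gives $\Phi_{r\alpha}(u)\in L^1(|x|^b)$ for every $r$. This shows $|x|^b f(u)\in L^1_{\mathrm{loc}}$, and more precisely that $|x|^bf(u)\varphi$ is integrable. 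We will in fact need this estimate for $\varphi$ replaced by any bounded compactly supported function, for instance any $g\varphi$.

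\textbf{Averaging over $\mathcal{O}(4)$.} Let $d g$ be the normalized Haar measure on the compact group $\mathcal{O}(4)$ and set $\bar\varphi:=\int_{\mathcal{O}(4)} g\varphi\,dg$. Then $\bar\varphi$ is radial, smooth, and supported in the same ball, so $\bar\varphi\in E^a_{\mathrm{rad}}$ and $L(\bar\varphi)=I'_\lambda(u)\bar\varphi=0$. The key invariance is $L(g\varphi)=L(\varphi)$ for every $g\in\mathcal{O}(4)$. To see it, change variables $x\mapsto gx$. The Lebesgue measure is invariant under this map, and so are the weights $|x|^a$ and $|x|^b$ and the function $u$. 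Moreover $\Delta(g\varphi)=g(\Delta\varphi)$ because the Laplacian commutes with orthogonal maps, and likewise $\Delta u$ is radial.

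\textbf{Conclusion and main obstacle.} It remains to show $L(\bar\varphi)=\int_{\mathcal{O}(4)}L(g\varphi)\,dg$, which then yields $0=L(\bar\varphi)=L(\varphi)$. This interchange of $L$ with the Haar integral is the only delicate point, and it is handled by Fubini. The map $(g,x)\mapsto g\varphi(x)$ is continuous and supported in $\mathcal{O}(4)\times \overline{B_R}$, with $|g\varphi|\le\|\varphi\|_\infty$, $|\Delta(g\varphi)|\le\|\Delta\varphi\|_\infty$, and $\Delta\bar\varphi=\int_{\mathcal{O}(4)}\Delta(g\varphi)\,dg$ by differentiation under the integral sign. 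The integrands are therefore dominated, on $\mathcal{O}(4)\times B_R$, by the functions
\[
|\Delta u|\,\|\Delta\varphi\|_\infty,\qquad |x|^a|u|\,\|\varphi\|_\infty,\qquad |\lambda||x|^b|f(u)|\,\|\varphi\|_\infty .
\]
Each of these belongs to $L^1(B_R)$: the first by Cauchy--Schwarz, the second since $|x|^a\in L^1(B_R)$ and $|x|^{a/2}u\in L^2$, and the third by the estimate of the first step. Hence Fubini applies and the proof is complete.
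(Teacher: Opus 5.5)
Your argument is correct, but it takes a different route from the paper's.

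\textbf{The paper's route.} It follows the abstract Palais scheme. It first shows that $T_u(w)=\int(\Delta u\Delta w+|x|^a uw)\,\mathrm{d}x-\lambda\int|x|^b f(u)w\,\mathrm{d}x$ is a bounded linear functional on the whole non-radial space $E^a$. This takes most of the work: local H\"older estimates via Lemma \ref{lema-imersao-local}, and exterior estimates that combine the decay of the radial function $u$ from Lemma \ref{lema-radial} with $\|w\|_{L^2(\mathbb{R}^4,|x|^a)}$, choosing $q$ so that $b-a-(q-1)(a+4)/2\le 0$. It then uses uniform convexity of $E^a$ to obtain a unique norm-one maximizer $\widetilde u$ of $T_u$. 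Since $T_u$ and $\|\cdot\|_a$ are $\mathcal{O}(4)$-invariant, uniqueness forces $g\widetilde u=\widetilde u$. Hence $\widetilde u\in E^a_{\mathrm{rad}}$ and $\|T_u\|=T_u(\widetilde u)=0$.

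\textbf{Your route.} You average the test function over $\mathcal{O}(4)$ and swap $L$ with the Haar integral by Fubini. This needs only $\Delta u$, $|x|^a u$ and $|x|^b f(u)$ to be in $L^1_{\mathrm{loc}}$. You need no control of $T_u$ on non-radial functions near infinity, and no appeal to uniform convexity or to uniqueness of norming elements. The argument is shorter and more elementary, and it gives exactly the identity required in \eqref{weak}, namely testing against $C_0^\infty(\mathbb{R}^4)$.

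\textbf{What each buys.} The paper's approach gives a stronger conclusion: $T_u\equiv 0$ on all of $E^a$, so the Euler--Lagrange identity holds for every $w\in E^a$. To get that from your argument you would still need the paper's boundedness estimate for $T_u$ on $E^a$, followed by a density argument. For the lemma as stated, however, your proof is complete.
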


\begin{proof} Consider the linear functional $T_u:E^a\to \mathbb{R}$ given by
$$
	T_u(w):=\int_{\mathbb{R}^4} \left(\Delta u \Delta w + |x|^{a}uw\right)\,\mathrm{d}x-\lambda \int_{\mathbb{R}^4} |x|^{b}f(u)w\,\mathrm{d}x, \quad w\in E^a.
	$$
Assume for the moment that \(T_u\) is continuous, that is, there exists \(C_0>0\) such that
\begin{equation}\label{funcional-continuo}
    |T_u(w)| \leq C_0\|w\|_a, \quad w \in E^a.
\end{equation}
Thus, in light of \cite[Lemme 14.5]{Willem2}, uniform convexity implies that there exists a unique \(\widetilde{u} \in E^a\), with \(\|\widetilde{u}\|_a=1\), such that 
\begin{equation}\label{unicidade-norma}
    T_u(\widetilde{u}) = \max_{\|w\|_a=1} |T_u(w)|.
\end{equation}
Since \(
gv(x):=v(g^{-1}x)
\), for any \(g \in \mathcal{O}(4)\), observing that
\[
\begin{aligned}
    T_u(g\widetilde{u}) &= \int_{\mathbb{R}^4} \left(\Delta (gu) \Delta (g\widetilde{u}) + |g^{-1}x|^{a}(gu)(g\widetilde{u})\right)\,\mathrm{d}x-\lambda \int_{\mathbb{R}^4} |g^{-1}x|^{b}f(gu)(g\widetilde{u})\,\mathrm{d}x,
\end{aligned}
\]
we can apply the Change of Variable Theorem together with \(g^{-1}\mathbb{R}^4 = \mathbb{R}^4\) to get \(T_u(g\widetilde{u}) = T_u(\widetilde{u})\). Similarly, \(\|g\widetilde{u}\|_a=\|\widetilde{u}\|_a=1\). By uniquenes in \eqref{unicidade-norma}, we can conclude that \(g\widetilde{u}=\widetilde{u}\), and so \(\widetilde{u} \in E^a_{\mathrm{rad}}\). Therefore,
\[
\max_{\|w\|_a=1} |T_u(w)| = T_u(\widetilde{u})=0,
\]
because \(u\) is a critical point of \(I_\lambda\) restricted to \(E^a_{\mathrm{rad}}\). Thus, if \(w\in E^a\setminus \{0\}\), then
\[
T_u(w) =   \|w\|_aT_u\left(\frac{w}{\|w\|_a}\right) =0,
\]
  and therefore \(u\) is a weak solution of \eqref{P}. Consequently, it remains to prove \eqref{funcional-continuo}. To this end, by H\"{o}lder's inequality, we have
  \begin{equation}\label{est-holder-laplaciano}
      \left| \int_{\mathbb{R}^4} \Delta u \Delta w \,\mathrm{d}x\right| \leq \left(\int_{\mathbb{R}^4} |\Delta u|^2\,\mathrm{d}x\right)^{1/2} \left(\int_{\mathbb{R}^4} |\Delta w|^2\,\mathrm{d}x\right)^{1/2}\leq C_1\|w\|_a,
  \end{equation}
    and 
    \begin{equation}\label{est-holder-xa}
        \left| \int_{\mathbb{R}^4} |x|^{a} u w \,\mathrm{d}x\right| \leq \left(\int_{\mathbb{R}^4} |x|^{a} u^2 \,\mathrm{d}x\right)^{1/2} \left(\int_{\mathbb{R}^4} |x|^{a} w^2 \,\mathrm{d}x\right)^{1/2}\leq C_2\|w\|_a.
    \end{equation}
	Now, we estimate the term \( \displaystyle \left|\int_{\mathbb{R}^4} |x|^{b}f(u)w\,\mathrm{d}x\right|\). From \eqref{des-F}, there holds
    \begin{equation}\label{estimativa-termo-f}
        \left|\int_{\mathbb{R}^4}|x|^{b}f(u)w\,\mathrm{d}x\right| \leq\varepsilon\int_{\mathbb{R}^4}|x|^{b}|u|^{\gamma_{a,b}-1}|w|\,\mathrm{d}x+C_f\int_{\mathbb{R}^4}|x|^{b}|u|^{q-1}\Phi_{\alpha}(u)|w|\,\mathrm{d}x.
    \end{equation}
For \(R>0\), we can write
    \begin{equation}\label{q1}
		\int_{\mathbb{R}^4}|x|^{b}|u|^{\gamma_{a,b}-1}|w|\,\mathrm{d}x=\int_{B_R}|x|^{b}|u|^{\gamma_{a,b}-1}|w|\,\mathrm{d}x+\int_{\mathbb{R}^4 \setminus B_R}|x|^{b}|u|^{\gamma_{a,b}-1}|w|\,\mathrm{d}x.
	\end{equation}
Choosing \(r_1>1\) such that $|x|^{r_1b} \in L^1(B_R)$, we can apply H\"{o}lder's inequality and Lemma~\ref{lema-imersao-local} to get
\begin{equation}\label{integral-bola-xb}
    \begin{aligned}
\int_{B_R}|x|^{b}|u|^{\gamma_{a,b}-1}|w|\,\mathrm{d}x &\leq \left(\int_{B_R}|x|^{r_1b}\,\mathrm{d}x\right)^{1/r_1}\|u\|_{L^{r_2(\gamma_{a,b}-1)}(B_R)}^{\gamma_{a,b}-1}\|w\|_{L^{r_3}(B_R)} \\
&\leq C_4\|u\|_a^{\gamma_{a,b}-1}\|w\|_a,   
\end{aligned}
\end{equation}
with $1/r_1+1/r_2+1/r_3=1$. To estimate the integral over \(\mathbb{R}^4 \setminus B_R\), we can use Lemma \ref{lema-radial} to obtain
\[
|u(|x|)|^{\gamma_{a,b}-2} \leq C_5|x|^{-(\gamma_{a,b}-2)(a+4)/4}\|u\|_a^{\gamma_{a,b}-2}, \quad |x|\geq R.
\]
As a consequence,
\[
\int_{\mathbb{R}^4 \setminus B_R}|x|^{b}|u|^{\gamma_{a,b}-1}|w|\,\mathrm{d}x \leq C_6\|u\|_a^{\gamma_{a,b}-2}\int_{\mathbb{R}^4 \setminus B_R}|x|^\xi|x|^a|u||w|\dx,
\]
with \(\xi:=b-a-(\gamma_{a,b}-2)(a+4)/4\). By definition of \(\gamma_{a,b}\) (see \eqref{gamma}), it holds that \(\xi\leq0\). This, combined with H\"{o}lder's inequality, implies that
\[
\int_{\mathbb{R}^4 \setminus B_R}|x|^{b}|u|^{\gamma_{a,b}-1}|w|\,\mathrm{d}x \leq C_6\|u\|_a^{\gamma_{a,b}-2}R^\xi\|u\|_a\|w\|_a = C_7\|w\|_a,
\]
for some \(C_7=C_7(u)>0\). From the last estimate, together with \eqref{q1} and \eqref{integral-bola-xb}, we have
\[
\int_{\mathbb{R}^4}|x|^{b}|u|^{\gamma_{a,b}-1}|w|\,\mathrm{d}x \leq C_8\|w\|_a.
\]
Applying this inequality in \eqref{estimativa-termo-f}, it holds that
\begin{equation}\label{f(u)w}
\left|\int_{\mathbb{R}^4}|x|^{b}f(u)w\,\mathrm{d}x\right| \leq \varepsilon C_8\|w\|_a+C_f\int_{\mathbb{R}^4}|x|^{b}|u|^{q-1}\Phi_{\alpha}(u)|w|\,\mathrm{d}x.
\end{equation}
For the local part, using H\"{o}lder's inequality (with \(1/t_1+1/t_2=1\)), \eqref{elementar}, and Theorem \ref{TM}, one infers
\[
\begin{aligned}
    \int_{B_R}|x|^{b}|u|^{q-1}\Phi_{\alpha}(u)|w|\,\mathrm{d}x &\leq \left(\int_{B_R}|x|^b\Phi_{t_1\alpha}(u)\dx\right)^{1/t_1} \left(\int_{B_R}|x|^b(|u|^{q-1}|w|)^{t_2}\dx\right)^{1/t_2}
    \\
    &=C_9\left(\int_{B_R}|x|^b(|u|^{q-1}|w|)^{t_2}\dx\right)^{1/t_2}.
\end{aligned}
\]
Again, H\"{o}lder's inequality, with \(1/s_1+1/s_2+1/s_3=1\) and \(s_1>1\) satisfying \(s_1b>-4\), yields
\[
\int_{B_R}|x|^b(|u|^{q-1}|w|)^{t_2}\dx \leq \left(\int_{B_R}|x|^{s_1b}\dx\right)^{1/s_1}\|u\|_{L^{s_2t_2(q-1)}(B_R)}^{q-1}\|w\|_{L^{s_3t_2}(B_R)}^{t_2} \leq C_{10}\|w\|_a^{t_2},
\]
where we used Lemma \ref{lema-imersao-local}. Combining the last two inequalities, we have that
\begin{equation}\label{est-bola-exp-xb}
    \int_{B_R}|x|^{b}|u|^{q-1}\Phi_{\alpha}(u)|w|\,\mathrm{d}x \leq C_{11}\|w\|_a.
\end{equation}
For the exterior part, by H\"{o}lder's inequality, \eqref{elementar}, and Theorem~\ref{TM}, we obtain
\begin{equation}\label{est-fora-da-bola-exp}
    \int_{\mathbb{R}^4 \setminus B_R}|x|^{b}|u|^{q-1}\Phi_{\alpha}(u)|w|\,\mathrm{d}x \leq C_{12}\left(\int_{\mathbb{R}^4 \setminus B_R}|x|^b|u|^{2(q-1)}|w|^2\,\mathrm{d}x\right)^{1/2},
\end{equation}
where
\( \displaystyle
C_{12}:=\left(\int_{\mathbb{R}^4 \setminus B_R}|x|^{b}\Phi_{2\alpha}(u)\,\mathrm{d}x\right)^{1/2}
\). In view of Lemma \ref{lema-radial}, there holds
	$$
	\begin{aligned}
		\int_{\mathbb{R}^4 \setminus B_R}|x|^b|u|^{2(q-1)}|w|^2\,\mathrm{d}x &=\int_{\mathbb{R}^4 \setminus B_R}|x|^{b-a}|u|^{2(q-1)}|x|^a|w|^2\,\mathrm{d}x
		\\
		&\leq C_{13}\int_{\mathbb{R}^4 \setminus B_R}|x|^{b-a-(q-1)(a+4)/2}|x|^a|w|^2\,\mathrm{d}x,
	\end{aligned}
	$$
    for some \(C_{13}=C_{13}(u)>0\). Choosing \(q\geq1\) such that $b-a-(q-1)(a+4)/2 \leq 0 $, we get
\[
\int_{\mathbb{R}^4 \setminus B_R}|x|^b|u|^{2(q-1)}|w|^2\,\mathrm{d}x \leq C_{14}\|w\|_a^2.
\]
Using this estimate together with \eqref{est-fora-da-bola-exp} and the above inequality, we have
\[
 \int_{\mathbb{R}^4 \setminus B_R}|x|^{b}|u|^{q-1}\Phi_{\alpha}(u)|w|\,\mathrm{d}x \leq C_{15}\|w\|_a.
\]
Combining this with \eqref{est-holder-laplaciano}, \eqref{est-holder-xa}, \eqref{f(u)w}, and \eqref{est-bola-exp-xb}, we deduce
$$
    |T_u(w)|= \left| \int_{\mathbb{R}^4} (\Delta u \Delta w + |x|^{a}uw)\,\mathrm{d}x-\lambda \int_{\mathbb{R}^4}|x|^{b}f(u)w\,\mathrm{d}x \right| \leq C_{16} \| w\|_a, 
    $$
   which shows that $T_u$ is continuous, completing the proof.
   \end{proof}

    The mountain pass geometry for the functional \(I_\lambda\) is provided in the next lemma.

    \begin{lemma}\label{passo-da-montanha} Suppose that $a,b>-4$, $\lambda>0$, and that \eqref{critical}, \ref{f1}, and \ref{f2} hold. Then, there exist $\tau,\rho>0$ such that $I_\lambda(u)\geq\tau$, whenever $\|u\|_a=\rho$. Furthermore, there exists $e\in E^a_{\mathrm{rad}}$ satisfying $\|e\|_a>\rho$ and $I_\lambda(e)<0$.
\end{lemma}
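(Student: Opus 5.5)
For the first assertion, I would start from the growth estimate \eqref{des-F} for $F$, using a fixed $\alpha>32\pi^2(1+b/4)$ and an exponent $q>\gamma_{a,b}$. This gives
\[
\int_{\mathbb{R}^4}|x|^bF(u)\dx\le \varepsilon C_1\|u\|_a^{\gamma_{a,b}}+C_1\|u\|_a^q\left(\int_{\mathbb{R}^4}|x|^b\Phi_{r_2\alpha}(u)\dx\right)^{1/r_2},
\]
as in \eqref{est-xb-Fu}. To keep the exponential factor uniformly bounded, I would restrict to $\|u\|_a=\rho$ with $\rho$ small. Writing $\Phi_{r_2\alpha}(u)=\Phi_{r_2\alpha\rho^2}(u/\rho)$, Theorem \ref{TM} applies to $u/\rho$ as soon as $r_2\alpha\rho^2<32\pi^2(1+b/4)$. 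The integral is then bounded by the constant $\mathcal{S}(r_2\alpha\rho^2,a,b)$, which is uniform in such $u$.

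The main obstacle is the case $\gamma_{a,b}>2$. There the quadratic term $\frac12\|u\|_a^2$ has to dominate $\lambda\varepsilon C_1\|u\|_a^{\gamma_{a,b}}$, and this is fine since $\gamma_{a,b}>2$ and $\rho$ is small. When $\gamma_{a,b}=2$, I would instead use the smallness of $\varepsilon$, choosing $\lambda\varepsilon C_1<1/4$. In either case, possibly after shrinking $\rho$, one gets
\[
I_\lambda(u)\ge \tfrac12\rho^2-\lambda\varepsilon C_1\rho^{\gamma_{a,b}}-\lambda C_2\rho^q\ge \tfrac18\rho^2=:\tau>0.
\]
This uses $q>2$ and $q>\gamma_{a,b}$.

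For the second assertion, I would integrate \ref{f2}. It yields $F(s)\ge C(|s|^\theta-1)$ for $|s|\ge 1$; more precisely, $F(ts)\ge t^\theta F(s)$ for $t\ge1$. I would fix a nonnegative radial $\varphi\in C_0^\infty(\mathbb{R}^4)\setminus\{0\}$, which lies in $E^a_{\mathrm{rad}}$, and compute
\[
I_\lambda(t\varphi)\le \tfrac{t^2}{2}\|\varphi\|_a^2-\lambda t^\theta\int_{\mathrm{supp}\,\varphi}|x|^bF(\varphi)\dx.
\]
The last integral is positive because $F>0$ off the origin, and it is finite because $|x|^b$ is integrable on compact sets and $F(\varphi)$ is bounded. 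Since $\theta>\gamma_{a,b}\ge2$, we get $I_\lambda(t\varphi)\to-\infty$ as $t\to\infty$. Taking $e=t\varphi$ with $t$ large then gives both $\|e\|_a>\rho$ and $I_\lambda(e)<0$.
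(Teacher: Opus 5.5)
Your proposal is correct and follows essentially the same route as the paper. For the first part you use the small-sphere estimate from \eqref{est-xb-Fu}, the rescaling $\Phi_{r_2\alpha}(u)=\Phi_{r_2\alpha\rho^2}(u/\rho)$ together with Theorem \ref{TM}, and the smallness of $\varepsilon$ when $\gamma_{a,b}=2$; for the second part your only deviation is using the homogeneity bound $F(ts)\ge t^\theta F(s)$ for $t\ge 1$ instead of the paper's $F(s)\ge C_3|s|^\theta-C_4$, which is an equally valid and slightly cleaner consequence of \ref{f2}.
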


\begin{proof}
In view of \eqref{est-xb-Fu} and the property 
\begin{equation}\label{propriedade-phi}
    \Phi_\alpha(ts) = \Phi_{\alpha t^2}(s),
\end{equation}
we obtain
\[
\int_{\mathbb{R}^4}|x|^{b}F(u)\,\mathrm{d}x \leq \varepsilon C_1\|u\|_a^{\gamma_{a,b}} + C_1\|u\|_a^q\left(\int_{\mathbb{R}^4}|x|^{b}\Phi_{r_2\alpha\|u\|_a^2}\left(\frac{u}{\|u\|_a}\right)\,\mathrm{d}x\right)^{1/r_2}.
\]
If $\|u\|_a \leq\rho$, with \(r_2\alpha\rho^2<32\pi^2(1+b/4)\), by Theorem \ref{TM},
$$
	\int_{\mathbb{R}^4}|x|^{b}F(u)\,\mathrm{d}x\leq\varepsilon C_2\|u\|_a^{\gamma_{a,b}}+C_2\|u\|_a^q.
	$$
So, if $\|u\|_a\leq\rho$, then
$$
	I_\lambda(u)\geq\frac{1}{2}\|u\|_a^2-\lambda(\varepsilon C_2\|u\|_a^{\gamma_{a,b}}+C_2\|u\|_a^q)=\|u\|_a^2\left(\frac{1}{2}-\varepsilon \lambda C_2\|u\|_a^{\gamma_{a,b}-2}-\lambda C_2\|u\|_a^{q-2}\right).
	$$
    Choosing $0<\varepsilon<1/(4\lambda C_2)$  and $q>\gamma_{a,b}\geq 2$, we may choose \(\rho>0\) sufficiently small such that
$
\lambda C_{2}\rho^{q-2}\leq 1/8
$
and, whenever \(\gamma_{a,b}>2\),
$
\varepsilon\lambda C_{2}\rho^{\gamma_{a,b}-2}\leq 1/8.
$
If \(\gamma_{a,b}=2\), the corresponding term is already controlled by the choice of
\(\varepsilon\). Therefore, for every \(u\in E_{\mathrm{rad}}^{a}\) satisfying
\(\|u\|_{a}=\rho\), we obtain
\[
I_{\lambda}(u)
\geq
\|u\|_{a}^{2}
\left(
\frac12
-\varepsilon\lambda C_{2}\|u\|_{a}^{\gamma_{a,b}-2}
-\lambda C_{2}\|u\|_{a}^{q-2}
\right)
\geq
\frac18\|u\|_{a}^{2}
=
\frac{\rho^{2}}{8}.
\]

Finally, let $\varphi\in C_{0,\mathrm{rad}}^\infty(\mathbb{R}^4)\backslash\{0\}$. By \ref{f2}, there exist constants $C_3,C_4>0$ such that $F(s)\geq C_3|s|^\theta-C_4$, for all $s\in\mathbb{R}$. Consequently, for any $t>0$,
$$
	I_\lambda(t\varphi)\leq\frac{t^2}{2}\|\varphi\|_a^2-C_3 \lambda t^\theta\int_{\mathrm{supp \, (\varphi)}}|x|^{b}|\varphi|^\theta\,\mathrm{d}x+C_4 \lambda \int_{\mathrm{supp \, (\varphi)}}|x|^{b}\,\mathrm{d}x.
	$$
    Since $\theta>\gamma_{a,b}\geq 2$, we can choose $e:=t\varphi$, for $t>0$ sufficiently large satisfying $\|e\|_a>\rho$ and $I_\lambda(e)<0$. This completes the proof.
    \end{proof}

Now, recall that a sequence $(u_{n})_{n}\subset E^a_{\mathrm{rad}}$ is called a $(PS)_{c}$ sequence if
\[
I_\lambda(u_{n}) \to c \quad \mbox{and} \quad \Vert I_\lambda^{\prime}(u_{n})\Vert\rightarrow0, \quad \mbox{as} \ \  n\to \infty.
\]
In this case, the functional $I_\lambda$ is said to satisfy the $(PS)_{c}$ condition if every $(PS)_{c}$ sequence admits a convergent subsequence. In the next result, we show that there exists a constant $c_{0}>0$ such that if $c\in (0,c_{0})$, then $I_\lambda$ satisfies  the $(PS)_{c}$ condition.

\begin{lemma}\label{condicao-PS}
	Suppose that $a,b>-4$, \eqref{critical}, $\lambda>0$, and that \ref{f1}, and \ref{f2} hold. If 
    \[
    0<c<\frac{\theta-2}{2\theta},
    \]
    then the functional \(I_\lambda\) satisfies  the $(PS)_{c}$ condition.
\end{lemma}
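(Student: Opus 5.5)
Let $(u_n)_n\subset E^a_{\mathrm{rad}}$ be a $(PS)_c$ sequence with $0<c<(\theta-2)/(2\theta)$. The plan follows the standard route for critical exponential problems: first boundedness, then a uniform $L^r$ bound on $f(u_n)$ via Theorem \ref{TM}, then strong convergence through the compact embedding of Theorem \ref{Teoimersao}.

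\textbf{Boundedness and a norm bound.} By \ref{f2},
\[
\theta I_\lambda(u_n)-I_\lambda'(u_n)u_n=\Big(\tfrac{\theta}{2}-1\Big)\|u_n\|_a^2+\lambda\int_{\mathbb{R}^4}|x|^b\big(f(u_n)u_n-\theta F(u_n)\big)\dx\ge \Big(\tfrac{\theta}{2}-1\Big)\|u_n\|_a^2 .
\]
Hence $(u_n)_n$ is bounded, and more precisely
\[
\limsup_{n\to\infty}\|u_n\|_a^2\le \frac{2\theta c}{\theta-2}<1 .
\]
The same identity also gives $\lambda\int|x|^bf(u_n)u_n\le C$ and $\lambda\int|x|^bF(u_n)\le C$. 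Passing to a subsequence, $u_n\rightharpoonup u$ in $E^a_{\mathrm{rad}}$, $u_n\to u$ in $L^p(\mathbb{R}^4,|x|^b)$ for every $p>\gamma_{a,b}$ (Theorem \ref{Teoimersao}), and $u_n\to u$ a.e.

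\textbf{Uniform integrability of $f(u_n)$.} Fix $m$ with $\limsup\|u_n\|_a^2<m<1$, and use \eqref{des-F} with $\alpha>32\pi^2(1+b/4)$ chosen close to $32\pi^2(1+b/4)$. By \eqref{elementar} and \eqref{propriedade-phi}, for $r>1$ near $1$ we have
\[
\int_{\mathbb{R}^4}|x|^b\Phi_\alpha(u_n)^r\dx\le \int_{\mathbb{R}^4}|x|^b\Phi_{r\alpha m}\!\Big(\frac{u_n}{\|u_n\|_a}\Big)\dx .
\]
This is bounded uniformly in $n$ by Theorem \ref{TM}, since $r\alpha m<32\pi^2(1+b/4)$ for $\alpha$ and $r$ close enough to their thresholds; this uses $m<1$ strictly. \textbf{This is the main obstacle}: the whole point of the level restriction $c<(\theta-2)/(2\theta)$ is to keep $\|u_n\|_a$ strictly below $1$ so that this subcritical Adams bound applies uniformly.

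\textbf{Strong convergence.} Write
\[
\|u_n-u\|_a^2=I_\lambda'(u_n)(u_n-u)-I_\lambda'(u)(u_n-u)+\lambda\int_{\mathbb{R}^4}|x|^b\big(f(u_n)-f(u)\big)(u_n-u)\dx .
\]
The first two terms tend to $0$: the first because the sequence is Palais--Smale and bounded, the second by weak convergence. For the integral, use \eqref{des-F}:
\begin{itemize}[label={}]
\item the term $\varepsilon\int|x|^b|u_n|^{\gamma_{a,b}-1}|u_n-u|$ is $\le \varepsilon C$ by H\"older's inequality and the continuous embedding at $p=\gamma_{a,b}$;
\item the term $\int|x|^b|u_n|^{q-1}\Phi_\alpha(u_n)|u_n-u|$ is handled by H\"older with exponents $r$, $r'$. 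The first factor is uniformly bounded by the previous step. The second factor is $\|\,|u_n|^{q-1}|u_n-u|\,\|_{L^{r'}(|x|^b)}\le \|u_n\|_{L^{2(q-1)r'}}^{q-1}\|u_n-u\|_{L^{2r'}}$ (weighted norms), with $q$ chosen large enough that both exponents exceed $\gamma_{a,b}$. The compact embedding then gives $\|u_n-u\|_{L^{2r'}(\mathbb{R}^4,|x|^b)}\to0$.
\end{itemize}
The terms involving $f(u)$ are treated in the same way. Since $\varepsilon>0$ is arbitrary, we conclude $\|u_n-u\|_a\to0$.
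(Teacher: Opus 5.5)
Your proposal is correct and follows essentially the same route as the paper: you get boundedness and $\limsup_{n}\|u_n\|_a^2\le 2\theta c/(\theta-2)<1$ from \ref{f2}, then a uniform subcritical Adams bound via \eqref{elementar}, \eqref{propriedade-phi} and Theorem~\ref{TM}, and finally H\"older plus the compact embedding of Theorem~\ref{Teoimersao} to make $\int_{\mathbb{R}^4}|x|^b f(u_n)(u_n-u)\dx\to0$. The differences are cosmetic: you subtract $I_\lambda'(u)(u_n-u)$, so you must also treat the $f(u)$ term, which the paper avoids by using $\langle u,u_n-u\rangle_a\to0$ directly; and you keep a general $q$ in \eqref{des-F} where the paper takes $q=1$ (in either case, $2r'$ or $r'$ exceeding $\gamma_{a,b}$ comes from taking $r$ close to $1$, not from taking $q$ large).
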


\begin{proof}
	Let $(u_n)_{n}\subset E^a_{\mathrm{rad}}$ be a $(PS)_c$ sequence. Hence, 
\begin{equation}\label{ps}
	I_\lambda(u_n)=c+o_n(1) \quad \mbox{and} \quad I_\lambda'(u_n)u_n=o_n(\|u_n\|_a).
	\end{equation} 
Together with \ref{f2}, these convergences imply that there exist $C_1,C_2>0$ such that
    \begin{equation}\label{limitada}
	\begin{aligned}
		C_1+C_2\|u_n\|_a &\geq I_\lambda(u_n)-\frac{1}{\theta}I_\lambda'(u_n)u_n\\
		&=\left(\frac{1}{2}-\frac{1}{\theta}\right)\|u_n\|_a ^2-\lambda\int_{\mathbb{R}^4}  |x|^{b}\left(F(u_n)-\frac{1}{\theta}f(u_n)u_n\right)\,\mathrm{d}x\\
		&\geq\left(\frac{1}{2}-\frac{1}{\theta}\right)\|u_n\|_a^2.
	\end{aligned}
	\end{equation}
Thus, $(u_n)_{n}$ is bounded in $E^a_{\mathrm{rad}}$. Therefore, up to a subsequence, $u_n \rightharpoonup u$ weakly in $E^a_{\mathrm{rad}}$.  We claim that
	\begin{equation}\label{convergencia-fun}
	\int_{\mathbb{R}^4}|x|^{b}f(u_n)(u_n-u)\,\mathrm{d}x=o_n(1).
	\end{equation}
Once \eqref{convergencia-fun} is proved, since $I_\lambda'(u_n)(u_n-u)=o_n(1)$, one has
$$
	\langle u_n, u_n-u \rangle_a =\int_{\mathbb{R}^4} \left(\Delta u_n \Delta (u_n-u) + |x|^{a}u_n(u_n-u)\right)\,\mathrm{d}x =o_n(1).
	$$
On the other hand, the weak convergence implies that
    $$
    \langle u, u_n-u \rangle_a = \int_{\mathbb{R}^4}\left(\Delta u\Delta (u_n-u)+ |x|^au(u_n-u)\right)\,\mathrm{d}x =o_n(1), 
    $$
and so \(\| u_n-u\|_a ^2 = o_n(1)\). Thus, $u_n\to u$ strongly in $E^a_{\mathrm{rad}}$. Therefore, it is sufficient to prove \eqref{convergencia-fun}. In fact, by \eqref{des-F} (with \(q=1\)),
	$$
	\left| \int_{\mathbb{R}^4}|x|^{b}f(u_n)(u_n-u)\,\mathrm{d}x\right|\leq\varepsilon A_n+C_3B_n,
	$$
	where 
	$$
	A_n:=\int_{\mathbb{R}^4}|x|^{b}|u_n|^{\gamma_{a,b}-1}|u_n-u|\,\mathrm{d}x \quad \mbox{and} \quad B_n:=\int_{\mathbb{R}^4}|x|^{b}\Phi_{\alpha}(u_n)|u_n-u|\,\mathrm{d}x.
	$$
H\"{o}lder's inequality yields
	$$
	A_n\leq\|u_n\|_{L^{\gamma_{a,b}}(\mathbb{R}^4,|x|^{b})}^{\gamma_{a,b}-1}\|u_n-u\|_{L^{\gamma_{a,b}}(\mathbb{R}^4,|x|^{b})}.
	$$
    Thus, \((A_n)_n\) is bounded in \(\mathbb R\). Since \(\varepsilon>0\) is arbitrary, it remains to prove that \(B_n=o_n(1)\). By \eqref{propriedade-phi}, we can write
    \begin{equation}\label{Bn}
        B_n = \int_{\mathbb{R}^4}|x|^{b}\Phi_{\alpha\|u_n\|_a^2}\left(\frac{u_n}{\|u_n\|_a}\right)|u_n-u|\,\mathrm{d}x.
    \end{equation}
    To estimate \(B_n\), arguing as in \eqref{limitada}, we obtain
    $$
	c=\lim_{n\to\infty}\left(I_\lambda(u_n)-\frac{1}{\theta}I_\lambda'(u_n)u_n\right)\geq\limsup_{n\to\infty}\left(\frac{1}{2}-\frac{1}{\theta}\right)\|u_n\|_a^2,
	$$
that is,
\[
\limsup_{n\to\infty}\|u_n\|_a^2 \leq\frac{2\theta}{\theta-2}c.
\]
Hence, assuming \(c<(\theta-2)/2\theta\), there holds
    $$
	\limsup_{n\to\infty}\|u_n\|_a^{2}<1.
	$$
Thus, we can choose 
\[
32\pi^2\left(1+\frac{b}{4}\right)<\alpha<32\pi^2\left(1+\frac{b}{4}\right)\left[\limsup_{n\to\infty}\|u_n\|_a^{2}\right]^{-1},
\]
if $\limsup_{n\to\infty}\|u_n\|_a^{2}>0$. Taking \(r_1\in \mathbb{R}\) such that
\[
1<r_1<32\pi^2\left(1+\frac{b}{4}\right)\left[\alpha\limsup_{n\to\infty}\|u_n\|_a^{2}\right]^{-1},
\]
one concludes that $r_1\alpha\|u_n\|_a^{2}<32\pi^2(1+b/4)$, for $n\in\mathbb{N}$ large enough. The case $\lim_{n\to\infty}\|u_n\|_a^{2}=0$ is immediate. Next, applying H\"older's inequality in \eqref{Bn} together with
\eqref{elementar} and Theorems \ref{Teoimersao} and \ref{TM}, it follows that
\[
B_n \leq \left(\int_{\mathbb{R}^4}|x|^{b}\Phi_{r_1\alpha\|u_n\|_a^{2}}\left(\frac{u_n}{\|u_n\|_a}\right)\,\mathrm{d}x\right)^{1/r_1}\|u_n-u\|_{L^{r_2}(\mathbb{R}^4,|x|^{b})} \leq C_4o_n(1),
\]
with \(r_2>\gamma_{a,b}\). Therefore, \eqref{convergencia-fun} is true and this completes the proof.
\end{proof}

In the next Proposition, we consider $\lambda>0$ sufficiently large to control the minimax energy level of the Mountain Pass Theorem in the range of Lemma \ref{condicao-PS}, where the functional \(I_\lambda\) satisfies  the $(PS)_{c}$ condition.

\begin{proposition}\label{solucao}
Suppose that $a,b>-4$, \eqref{critical}, and \ref{f1}-\ref{f3} hold. Then, for each 
\[
\lambda >\lambda_0:= \max \left\lbrace \frac{S_\nu}{2\mu} , \frac{S_\nu^{\nu/2}}{\mu\nu}\left(\frac{\theta(\nu-2)}{\nu(\theta-2)}\right)^{(\nu-2)/2}\right\rbrace
\]
the problem \eqref{P} admits at least one weak solution \(u_\lambda \in E^a_{\mathrm{rad}} \setminus \{0\}\).
\end{proposition}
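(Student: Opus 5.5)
We apply the Mountain Pass Theorem to $I_\lambda$ on $E^a_{\mathrm{rad}}$. Lemma \ref{passo-da-montanha} already gives the geometry: there are $\rho,\tau>0$ with $I_\lambda\geq\tau$ on $\|u\|_a=\rho$, and an $e$ with $\|e\|_a>\rho$ and $I_\lambda(e)<0$. Set
\[
c_\lambda:=\inf_{\gamma\in\Gamma}\max_{t\in[0,1]}I_\lambda(\gamma(t)),
\qquad
\Gamma:=\{\gamma\in C([0,1],E^a_{\mathrm{rad}}):\gamma(0)=0,\ I_\lambda(\gamma(1))<0\}.
\]
Then $c_\lambda\geq\tau>0$, and the Mountain Pass Theorem without compactness yields a $(PS)_{c_\lambda}$ sequence. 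The whole task is to show $c_\lambda<(\theta-2)/(2\theta)$ for $\lambda>\lambda_0$. Lemma \ref{condicao-PS} then gives a strongly convergent subsequence, whose limit $u_\lambda$ is a critical point with $I_\lambda(u_\lambda)=c_\lambda>0$, so $u_\lambda\neq0$. Finally, Lemma \ref{criticalidade} shows that $u_\lambda$ is a weak solution of \eqref{P}.

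For the level estimate, we use that the infimum in \eqref{snu} is attained, since the embedding into $L^\nu(\mathbb{R}^4,|x|^b)$ is compact for $\nu>\gamma_{a,b}$ (Theorem \ref{Teoimersao}). Choose $\varphi_\nu\in E^a_{\mathrm{rad}}$ with $\int|x|^b|\varphi_\nu|^\nu\,\mathrm{d}x=1$ and $\|\varphi_\nu\|_a^2=S_\nu$. If the attainment argument is delicate, we instead take a near-minimizer, since $\lambda>\lambda_0$ is strict and leaves slack. By \ref{f3},
\[
I_\lambda(t\varphi_\nu)\leq \frac{t^2}{2}S_\nu-\lambda\mu t^\nu=:h(t).
\]
Since $\nu>2$, $h$ is negative for large $t$, so $t\mapsto tT\varphi_\nu$ with $T$ large is an admissible path, and $c_\lambda\leq\max_{t\geq0}h(t)$. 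The maximum is attained at $t_*^{\nu-2}=S_\nu/(\lambda\mu\nu)$, giving
\[
\max_{t\ge0}h=\Bigl(\frac12-\frac1\nu\Bigr)S_\nu^{\nu/(\nu-2)}(\lambda\mu\nu)^{-2/(\nu-2)}.
\]
Requiring this to be less than $(\theta-2)/(2\theta)$ and solving for $\lambda$ gives exactly
\[
\lambda>\frac{S_\nu^{\nu/2}}{\mu\nu}\Bigl(\frac{\theta(\nu-2)}{\nu(\theta-2)}\Bigr)^{(\nu-2)/2},
\]
which is the second quantity in $\lambda_0$.

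The first quantity $S_\nu/(2\mu)$ guarantees that $I_\lambda(\varphi_\nu)\leq S_\nu/2-\lambda\mu<0$. So $\varphi_\nu$ itself, or the segment $t\mapsto t\varphi_\nu$ on $[0,1]$, is already an admissible path. This avoids any question of whether its endpoint lies beyond the sphere $\|u\|_a=\rho$.

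We expect no real obstacle beyond bookkeeping. The only points to check are the algebra equating $\max h$ with the threshold, and the existence of a (near-)extremal $\varphi_\nu$. The latter follows from compactness of the embedding, or by approximation using strictness of $\lambda>\lambda_0$.
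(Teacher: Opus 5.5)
Your proposal is correct and follows the paper's proof essentially verbatim. Both arguments run as follows: the mountain pass level satisfies $c_\lambda\geq\tau>0$ by Lemma \ref{passo-da-montanha}; the straight path $t\mapsto tv$ runs through a minimizer $v$ of $S_\nu$, which exists by the compact embedding of Theorem \ref{Teoimersao}; this gives the bound $c_\lambda\le\max_{t\ge0}\bigl(\tfrac{t^2}{2}S_\nu-\lambda\mu t^\nu\bigr)<\frac{\theta-2}{2\theta}$; and Lemmas \ref{condicao-PS} and \ref{criticalidade} finish. Your side remark about the rescaled path $t\mapsto tT\varphi_\nu$ is also correct: it makes the condition $\lambda>S_\nu/(2\mu)$ dispensable, since the energy along that path is bounded by $\max_{s\ge0}h(s)$ for every $T$.
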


\begin{proof}
By the Mountain Pass Theorem, there exists a sequence \((u_n)_n\) in \(E^a_{\mathrm{rad}}\) such that 
\[
I_\lambda(u_{n}) \to c_\lambda \quad \mbox{and} \quad \Vert I_\lambda^{\prime}(u_{n})\Vert\rightarrow0, \quad \mbox{as} \ \  n\to \infty,
\]
where
 \[
  c_\lambda:=\inf_{g\in \Gamma_\lambda}\max_{t\in[0,1]}I_\lambda(g(t))
 \]
and
 \[
  \Gamma_\lambda:=\left\lbrace g \in C([0,1],E^a_{\mathrm{rad}}) : g(0)=0 \mbox{ and } I_\lambda(g(1))<0\rbrace\right..
 \]
In this case, Lemma \ref{passo-da-montanha} ensures that \(0<\tau\leq c_\lambda<\infty\). Combining Lemmas \ref{criticalidade} and \ref{condicao-PS}, it remains to show that
    \[
    c_\lambda<\frac{\theta-2}{2\theta}.
    \]
First, we can apply Theorem ~\ref{Teoimersao} to get \(v\in E^a_{\mathrm{rad}}\) satisfying 
    $$
	\|v\|_a^2=S_\nu:=\inf\left\lbrace \|u\|_a^2:\int_{\mathbb{R}^4}|x|^{b}|u|^\nu\,\mathrm{d}x=1 \right\rbrace.
	$$
In view of \ref{f3}, we have
    $$
	I_\lambda(v)\leq\frac{1}{2} S_\nu-\lambda\mu\int_{\mathbb{R}^4}|x|^{b}|v|^\nu \,\mathrm{d}x = \frac{1}{2}S_\nu-\lambda\mu<0,
	$$
for any $\lambda>S_\nu/2\mu$. Hence, the path \(g(t):=tv\) belongs to \(\Gamma_\lambda\). Thus,
$$
	c_\lambda\leq\max_{0\leq t\leq1}I_\lambda(tv)\leq\max_{t\geq0}\left(\frac{1}{2}S_\nu t^2-\lambda\mu t^\nu\right)=\frac{S_\nu^{\nu/(\nu-2)}}{(\lambda\mu\nu)^{2/(\nu-2)}}
	\left(\frac{\nu-2}{2\nu}\right)<\frac{\theta-2}{2\theta},
	$$
whenever \(\lambda>\lambda_0\), completing the proof.
\end{proof}

\subsection{Regularity of weak solutions}

The next result establishes the regularity of the weak solutions obtained in the preceding subsection.

\begin{lemma}\label{regularidade} Let \(u\) be a weak solution of \eqref{P}. Then,
\begin{itemize}
    \item [$(a)$] there exists \(1<\widetilde{p}<\infty\) such that \(u \in W^{4,\widetilde{p}}_{\mathrm{loc}} (\mathbb{R}^4) \cap C^{0,\widetilde{\sigma}}_{\mathrm{loc}}(\mathbb{R}^4) \cap L^\infty(\mathbb{R}^4)\), for some $\widetilde{\sigma}\in(0,1)$;

    \item[$(b)$] if \(a,b>-1\), then \(u \in C^{3,\sigma}_{\mathrm{loc}}(\mathbb{R}^4)\), for some \(\sigma\in(0,1)\);

    \item[$(c)$] if \(a,b\geq0\) and \(f\) is a Lipschitz function, then \(u \in C^{4,\bar{\sigma}}_{\mathrm{loc}}(\mathbb{R}^4)\), for some \(\bar{\sigma}\in(0,1)\).
\end{itemize}
\end{lemma}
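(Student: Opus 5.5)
We rewrite \eqref{P} in the distributional sense as $\Delta^2 u = h$, where
\[
h:=\lambda|x|^b f(u)-|x|^a u .
\]
The plan is to bootstrap integrability of $h$ and then apply $L^p$ and Schauder theory for $\Delta^2$ on balls. We work with $u\in E^a_{\mathrm{rad}}$, since Theorem \ref{T1}(ii) concerns radial solutions. By Lemma \ref{lema-imersao-local}, $u\in H^2(B_R)$ for every $R>0$, and hence $u\in L^t(B_R)$ for all $t<\infty$.

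\emph{Integrability of $h$.} Theorem \ref{TM} gives $\Phi_\alpha(u)\in L^1(\mathbb{R}^4,|x|^b)$ for every $\alpha>0$, and therefore $e^{\alpha u^2}\in L^1(B_R,|x|^b)$. Fix $q>1$ with $qa>-4$ and $qb>-4$; this is possible since $a,b>-4$. By \eqref{des-F} and H\"older's inequality with exponents close to $1$, we get $|x|^b f(u)\in L^{\widetilde p}(B_R)$ and $|x|^a u\in L^{\widetilde p}(B_R)$ for some $\widetilde p>1$. The bound for $|x|^b f(u)$ combines the polynomial part with $\Phi_{s\alpha}(u)$, using \eqref{elementar}.

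\emph{Step (a).} Interior $L^p$ estimates for the biharmonic operator (Agmon--Douglis--Nirenberg, after localizing with a cutoff and treating the lower-order commutator terms, which lie in $H^2$ and are controlled) give $u\in W^{4,\widetilde p}_{\mathrm{loc}}$. Since $W^{4,\widetilde p}\hookrightarrow C^{0,\widetilde\sigma}$ in dimension $4$ for $\widetilde p>1$, we obtain $u\in C^{0,\widetilde\sigma}_{\mathrm{loc}}$. Here $4\widetilde p>4$, and even $W^{2,2}$ fails only borderline, so a small bootstrap may be needed to reach $\widetilde p>1$ strictly with a H\"older exponent; but $W^{4,p}$ with $p>1$ already embeds into $C^{2,\cdot}$ when $p>4/2$, and into $C^{0,\sigma}$ for any $p>1$.

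For the $L^\infty$ bound, local boundedness on $B_R$ follows from continuity. Outside $B_R$ we use Lemma \ref{lema-radial}: $|u(x)|\le C|x|^{-(a+4)/4}\|u\|_a$, which is bounded for $|x|\ge R$. Together these give $u\in L^\infty(\mathbb{R}^4)$.

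\emph{Step (b).} Now $u$ is locally bounded and continuous, so $f(u)\in L^\infty_{\mathrm{loc}}$ and $h\in L^p_{\mathrm{loc}}$ whenever $|x|^a,|x|^b\in L^p_{\mathrm{loc}}$, i.e.\ whenever $pa>-4$ and $pb>-4$. The hypothesis $a,b>-1$ allows us to take $p>4$ in both conditions. Then $u\in W^{4,p}_{\mathrm{loc}}$ with $p>4$, and Morrey's embedding $W^{4,p}\hookrightarrow C^{3,1-4/p}$ gives (b).

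\emph{Step (c).} Assume $a,b\ge0$ and $f$ locally Lipschitz. Then $|x|^a$ and $|x|^b$ are H\"older continuous locally: they are $C^{0,\min\{a,1\}}$, smooth away from $0$, and $|x|^0\equiv 1$. Also $f(u)$ is H\"older continuous, being a Lipschitz function of the $C^{3,\sigma}$ function $u$. Hence $h\in C^{0,\bar\sigma}_{\mathrm{loc}}$, and interior Schauder estimates for $\Delta^2$ give $u\in C^{4,\bar\sigma}_{\mathrm{loc}}$.

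\emph{Main obstacle.} The delicate point is step (a) near the origin, where the singular weights, the exponential nonlinearity and the weak (only $H^2$) regularity interact. The integrability of $|x|^b f(u)$ relies on Theorem \ref{TM} together with the three-factor H\"older inequality used in Lemma \ref{criticalidade}, and the exponent $\widetilde p$ must be chosen strictly greater than $1$ but close to $1$. A secondary issue is justifying that the weak formulation \eqref{weak} yields a distributional equation to which the $L^p$ theory applies, which we handle by localizing with cutoffs.
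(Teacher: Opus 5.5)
Your proposal is correct and follows the paper's proof essentially step by step. The shared steps are: integrability of $h=\lambda|x|^bf(u)-|x|^au$ on balls via Theorem~\ref{TM}, H\"older's inequality and \eqref{elementar}; $W^{4,\widetilde p}_{\mathrm{loc}}$ regularity plus Sobolev embedding, with Lemma~\ref{lema-radial} giving the global $L^\infty$ bound; the choice $p>4$ with $pa,pb>-4$ in (b); and Schauder theory in (c). The differences are cosmetic: you invoke Agmon--Douglis--Nirenberg estimates for $\Delta^2$ directly where the paper iterates second-order theory ($-\Delta(-\Delta u)=h$, then $-\Delta u_{x_ix_j}=-(\Delta u)_{x_ix_j}$), and in (b) you use the local boundedness from (a) instead of rerunning the Adams/H\"older estimate, which is slightly cleaner.
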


\begin{proof} 
To prove \((a)\), we claim that \(h \in L^{\widetilde{p}}(B_R)\), for some \(1<\widetilde{p}<\infty\), where
    \[
    h(x):=\lambda|x|^bf(u(x))-|x|^au(x).
    \]
 Since \(a,b>-4\), we can choose \(\widetilde{p}>1\) such that \(\widetilde{p}a,\widetilde{p}b>-4\). Hence, we can take \(t_1>1\) satisfying the integrability condition \(|x|^{t_1\widetilde{p}a} \in L^1(B_R)\). Therefore, Hölder's inequality together with Lemma \ref{lema-imersao-local} yields
\begin{equation}\label{conta-Lp-bola-a}
    \int_{B_R}\left||x|^au(x)\right|^{\widetilde{p}} \dx \leq \left(\int_{B_R}|x|^{t_1\widetilde{p}a}\dx\right)^{1/t_1}\left(\int_{B_R}|u|^{t_2\widetilde{p}}\dx\right)^{1/t_2}<\infty,
\end{equation}
where \(1/t_1+1/t_2=1\). It remains to prove that \(|x|^bf(u(x)) \in L^{\widetilde{p}}(B_R)\). From \eqref{des-F}, with \(q=1\), one has
    \[
    |x|^b|f(u)| \leq \varepsilon |x|^b|u|^{\gamma_{a,b}-1} + C_f|x|^b\Phi_\alpha(u).
    \]
Similarly to \eqref{conta-Lp-bola-a}, we get \(|x|^b|u|^{\gamma_{a,b}-1} \in L^{\widetilde{p}}(B_R)\). Finally, by Hölder's inequality, with \(s_1>1\) such that \(s_1\widetilde{p}b>-4\) and \(1/s_1+1/s_2=1\), and \eqref{elementar} there holds
 \[
\int_{B_R}\left||x|^b\Phi_\alpha(u)\right|^{\widetilde{p}} \dx \leq  \left(\int_{B_R}|x|^{s_1\widetilde{p}b}\dx\right)^{1/s_1}\left(\int_{B_R}\Phi_{s_2\widetilde{p}\alpha}(u)\dx\right)^{1/s_2}.
\]
Applying Theorem \ref{TM}, we conclude that \(\Phi_{s_2\widetilde{p}\alpha}(u) \in L^1(\mathbb{R}^4)\). This and Lemma \ref{lema-imersao-local} together with the above estimate imply that \(|x|^b\Phi_\alpha(u) \in L^{\widetilde{p}}(B_R)\), and hence \(h \in L^{\widetilde{p}}(B_R)\). Since $R>0$ is arbitrary, \(h \in L^{\widetilde{p}}_{\mathrm{loc}}(\mathbb{R}^4)\).  As
\begin{equation}\label{laplaciano-do-laplaciano}
    -\Delta(-\Delta u) = h, \quad \mbox{in}  \  \mathbb{R}^4,
\end{equation}
by regularity theory, we can conclude that \(\Delta u \in W^{2,\widetilde{p}}_{\mathrm{loc}}(\mathbb{R}^4)\). Defining \(g:=\Delta u \), we have that
\begin{equation}\label{laplaciano-g}
    -\Delta (u_{x_ix_j}) = -g_{x_ix_j}, \quad \mbox{in}  \ \mathbb{R}^4,
\end{equation}
with \(g_{x_ix_j} \in L^{\widetilde{p}}_{\mathrm{loc}}(\mathbb{R}^4)\). Applying interior elliptic regularity once more, it follows that \(u_{x_ix_j} \in W^{2,\widetilde{p}}_{\mathrm{loc}}(\mathbb{R}^4)\), and hence \(u \in W^{4,\widetilde{p}}_{\mathrm{loc}}(\mathbb{R}^4) \). By the embedding \( W^{4,\widetilde{p}}(B_R) \hookrightarrow C^{0,\widetilde{\sigma}}(\overline{B_R}) \), for some $\widetilde{\sigma}\in(0,1)$, and Lemma \ref{lema-radial}, it holds that \(u \in L ^\infty(\mathbb{R}^4)\). This proves item $(a)$.

To prove \((b)\), we claim that there exists \(p_1>4\) such that \(u \in W^{4,p_1}(B_R)\). To use the same reasoning above, we need to verify that \(p_1a,p_1b>-4\). The case \(a,b\geq0\) is immediate. If \(-1<a,b<0\), then considering
\[
4<p_1<\frac{4}{-a} \quad \mbox{and} \quad 4<p_1<\frac{4}{-b},
\]
we obtain \(p_1a,p_1b>-4\). The argument used in item \((a)\) allows us to conclude that \(u \in W^{4,p_1}(B_R)\). Since \(p_1>4\), we have \(3<4-4/p_1\), and so \(u \in W^{4,p_1}(B_R) \hookrightarrow C^{3,\sigma}(\overline{B_R})\), with \(\sigma\in(0,1)\), proving \((b)\).

Finally, we prove \((c)\). From item \((b)\) and the fact that  \(f\) is locally Lipschitz continuous, we obtain that \(h(x)=\lambda|x|^bf(u(x))-|x|^au(x)\) is locally H\"older continuous. This condition applied in \eqref{laplaciano-do-laplaciano} allows us to conclude that \(g=\Delta u \in C^{2,\sigma_1}(\overline{B_R})\), for some \(\sigma_1\in(0,1)\). Thus, we can use \eqref{laplaciano-g} to obtain \(u_{x_ix_j} \in C^{2,\sigma_2}(\overline{B_R}) \), with \(\sigma_2\in(0,1)\). Therefore, \(u \in C^{4,\sigma_3}(\overline{B_R})\), where \(\sigma_3\in (0,1)\), completing the proof.
\end{proof}

\subsection{A Pohozaev identity and nonexistence of solutions}

This subsection is devoted to the derivation of a Pohozaev identity for problem \eqref{P}. Since $u \in W^{4,{p}}_{\mathrm{loc}} (\mathbb{R}^4) $, we have
\[
\int_{\mathbb{R}^4}\Delta u \Delta \psi \dx = \int_{\mathbb{R}^4}\Delta^2 u \psi\dx, \quad \mbox{for all } \psi\in C_0^\infty(\mathbb{R}^4).
\]
Together with \eqref{weak}, this implies that
\[
\int_{\mathbb{R}^4}\left[ \Delta^2u + |x|^au - \lambda |x|^{b}f(u)\right]\psi \dx=0 , \quad \mbox{for all } \psi\in C_0^\infty(\mathbb{R}^4),
\]
and so
\[
\Delta^2u +|x|^{a}u = \lambda |x|^{b}f(u), \quad \mbox{a.e. in}  \  \mathbb{R}^4.
\]
Consider \(\varphi \in C^\infty(\mathbb{R})\) such that 
\[
\varphi \equiv 1, \ \ \mbox{in} \ (-\infty,1], \quad \varphi \equiv 0, \ \ \mbox{in} \ [2,\infty), \quad \mbox{and} \quad 0\leq \varphi\leq 1.
\]
Next, for each \(n\in\mathbb{N}\) define \(\varphi_n(x):=\varphi\left(|x|^2/n^2 \right) \in C_0^\infty(\mathbb{R}^4) \). As a consequence,
\begin{equation}\label{solucao-a.e.}
    \left(\Delta^2u +|x|^{a}u \right)\varphi_n [x\cdot \nabla u] = \lambda |x|^{b}f(u)\varphi_n [x\cdot \nabla u], \quad \mbox{a.e. in}  \  \mathbb{R}^4.
\end{equation}
Noting that
\[
\begin{aligned}
    \left(\varphi_n|x|^b F(u) x_i\right)_{x_i} &= (\varphi_n)_{x_i}|x|^b F(u) x_i + b\varphi_n|x|^{b-2}x_i^2F(u) + \varphi_n|x|^bf(u)u_{x_i}x_i
    \\
    &\quad + \varphi_n|x|^b F(u),
\end{aligned}
\]
we get
\[
\mathrm{div}\left(\varphi_n |x|^bF(u)x\right) = |x|^bF(u)[x\cdot\nabla \varphi_n] + (b+4)\varphi_n|x|^b F(u) + \varphi_n|x|^bf(u)[x\cdot\nabla u].
\]
Replacing \(F(u)\) by \(u^2/2\), we deduce
\[
\mathrm{div}\left(\varphi_n |x|^a\frac{u^2}{2}x\right) = |x|^a\frac{u^2}{2}[x\cdot\nabla \varphi_n] + (a+4)\varphi_n|x|^a \frac{u^2}{2} + \varphi_n|x|^au[x\cdot\nabla u].
\]
On the other hand, we can write
\[
\operatorname{div}\big(\varphi_n [x \cdot \nabla u]\nabla (\Delta u)\big) = [x \cdot \nabla u](\nabla \varphi_n \cdot \nabla (\Delta u)) + \varphi_n \nabla [x \cdot \nabla u] \cdot \nabla (\Delta u) + \Delta^2 u\, \varphi_n [x \cdot \nabla u].
\]
Applying the last three identities to \eqref{solucao-a.e.}, we obtain
\[
\begin{aligned}
\operatorname{div}\big(\varphi_n [x \cdot \nabla u]\nabla (\Delta u)\big) &- [x \cdot \nabla u](\nabla \varphi_n \cdot \nabla (\Delta u)) - \varphi_n \nabla [x \cdot \nabla u] \cdot \nabla (\Delta u)    
\\
&\quad + \mathrm{div}\left(\varphi_n |x|^a\frac{u^2}{2}x\right) - |x|^a\frac{u^2}{2}[x\cdot\nabla \varphi_n] - (a+4)\varphi_n|x|^a \frac{u^2}{2}
\\
&=\lambda\mathrm{div}\left(\varphi_n |x|^bF(u)x\right) - \lambda|x|^bF(u)[x\cdot\nabla \varphi_n] 
\\
&\quad-\lambda(b+4)\varphi_n|x|^b F(u).
\end{aligned}
\]
Hence,
\begin{equation}\label{conta-divergente}
    \begin{aligned}
&\mathrm{div}\left(\varphi_n\left([x \cdot \nabla u]\nabla (\Delta u) + |x|^a\frac{u^2}{2}x-\lambda|x|^bF(u)x  \right)\right) = [x \cdot \nabla u](\nabla \varphi_n \cdot \nabla (\Delta u)) \\
&\quad+ \varphi_n \nabla [x \cdot \nabla u] \cdot \nabla (\Delta u)  + |x|^a\frac{u^2}{2}[x\cdot\nabla \varphi_n] + (a+4)\varphi_n|x|^a \frac{u^2}{2}
\\
&\quad- \lambda|x|^bF(u)[x\cdot\nabla \varphi_n] -\lambda(b+4)\varphi_n|x|^b F(u).
\end{aligned}
\end{equation}
Fix \(n\in \mathbb{N}\) and consider \(R>0\) such that 
\(\mathrm{supp} \, (\varphi_n) \subset B_R\). Recall that, for \(\omega\in W^{1,p}_{loc}(\mathbb R^4)\), we have
\[
\int_{B_R}\omega \frac{\partial \varphi_n}{\partial x_i} \dx = - \int_{B_R} \frac{\partial \omega}{\partial x_i}\varphi_n \dx, 
\]
for \(i \in \{1,2,3,4\}\), we can conclude 
\begin{equation}\label{divergente-zero}
\int_{B_R}\frac{\partial}{\partial x_i}( \varphi_n  \omega)\dx =0.    
\end{equation}
Applying this equality to \eqref{conta-divergente}, we obtain
\begin{equation}\label{conta-div-final}
    \begin{aligned}
    0&=\int_{B_R}[x \cdot \nabla u](\nabla \varphi_n \cdot \nabla (\Delta u))\dx + \int_{B_R}\varphi_n \nabla [x \cdot \nabla u] \cdot \nabla (\Delta u)\dx  
    \\
    &\quad+ \frac{1}{2}\int_{B_R}|x|^au^2[x\cdot\nabla \varphi_n]\dx 
     + \frac{(a+4)}{2}\int_{B_R}\varphi_n|x|^a u^2\dx
     \\
     &\quad -\lambda\int_{B_R}|x|^bF(u)[x\cdot\nabla \varphi_n]\dx -\lambda(b+4)\int_{B_R}\varphi_n|x|^b F(u)\dx.
\end{aligned}
\end{equation}
Now, to estimate the term \(\int_{B_R}\varphi_n \nabla [x \cdot \nabla u] \cdot \nabla (\Delta u)\dx\), notice that we can write
\[
\begin{aligned}
    \mathrm{div}\left(\varphi_n \nabla [x\cdot \nabla u]\Delta u\right) &= \nabla\varphi_n \cdot \nabla [x\cdot \nabla u]\Delta u + \varphi_n \nabla \left(\nabla [x\cdot \nabla u]\right)\Delta u 
    \\
    &\quad+ \varphi_n \nabla [x \cdot \nabla u] \cdot \nabla (\Delta u).
\end{aligned}
\]
Since
\[
\begin{aligned}
    \varphi_n \nabla \left(\nabla [x\cdot \nabla u]\right)\Delta u  &= \varphi_n \Delta [x\cdot \nabla u] \Delta u = \varphi_n\left(2 \Delta u + x\cdot \nabla(\Delta u)\right)\Delta u 
    \\
    &=2\varphi_n (\Delta u)^2 + \varphi_n\left(x\cdot\nabla(\Delta u)\right)\Delta u,  
\end{aligned}
\]
one concludes
\[
\begin{aligned}
\varphi_n \nabla [x \cdot \nabla u] \cdot \nabla (\Delta u) &= \mathrm{div}\left(\varphi_n \nabla [x\cdot \nabla u]\Delta u\right) - \nabla\varphi_n \cdot \nabla [x\cdot \nabla u]\Delta u 
\\
&\quad- 2\varphi_n (\Delta u)^2 - \varphi_n\left(x\cdot\nabla(\Delta u)\right)\Delta u.    
\end{aligned}
\]
Combining this equality with
\[
\mathrm{div}\left(\varphi_n \frac{(\Delta u)^2}{2}x\right) = \frac{1}{2}[\nabla \varphi_n\cdot x](\Delta u)^2 + \varphi_n\left(x\cdot\nabla(\Delta u)\right)\Delta u + 2\varphi_n (\Delta u)^2
\]
we obtain
\[
\begin{aligned}
\varphi_n \nabla [x \cdot \nabla u] \cdot \nabla (\Delta u) &= \mathrm{div}\left(\varphi_n \nabla [x\cdot \nabla u]\Delta u\right) - \nabla\varphi_n \cdot \nabla [x\cdot \nabla u]\Delta u  
\\
&\quad + \frac{1}{2}[\nabla \varphi_n\cdot x](\Delta u)^2 - \mathrm{div}\left(\varphi_n \frac{(\Delta u)^2}{2}x\right).
\end{aligned}
\]
Applying \eqref{divergente-zero},
\[
\int_{B_R}\varphi_n \nabla [x \cdot \nabla u] \cdot \nabla (\Delta u)\dx = - \int_{B_R}\nabla\varphi_n \cdot \nabla [x\cdot \nabla u]\Delta u  \dx + \frac{1}{2}\int_{B_R}[\nabla \varphi_n\cdot x](\Delta u)^2\dx.
\]
Replacing this integral in \eqref{conta-div-final} it follows that
\[
\begin{aligned}
    0&=\int_{\mathbb{R}^4}[x \cdot \nabla u](\nabla \varphi_n \cdot \nabla (\Delta u))\dx - \int_{\mathbb{R}^4}\nabla\varphi_n \cdot \nabla [x\cdot \nabla u]\Delta u  \dx 
    \\
    &\quad+ \frac{1}{2}\int_{\mathbb{R}^4}[\nabla \varphi_n\cdot x](\Delta u)^2\dx
    + \frac{1}{2}\int_{\mathbb{R}^4}|x|^au^2[x\cdot\nabla \varphi_n]\dx 
     + \frac{(a+4)}{2}\int_{\mathbb{R}^4}\varphi_n|x|^a u^2\dx
     \\
     &\quad -\lambda\int_{\mathbb{R}^4}|x|^bF(u)[x\cdot\nabla \varphi_n]\dx -\lambda(b+4)\int_{\mathbb{R}^4}\varphi_n|x|^b F(u)\dx.
\end{aligned}
\]
Since \(\varphi_n \to 1\) and \(\nabla\varphi_n \to 0\), as \(n\to \infty\), applying the Lebesgue’s Dominated Convergence Theorem, we have that 
\[
0 =  \frac{a+4}{2}\int_{\mathbb{R}^4}|x|^a u^2\dx-\lambda(b+4)\int_{\mathbb{R}^4}|x|^b F(u)\dx,
\]
and hence \eqref{identidade-pohozaev} holds.

As a consequence of \eqref{identidade-pohozaev} the item \((iv)\) follows and the proof of Theorem \ref{T1} is finished.

Finally, assume that \(\lambda\leq 0\) and let
\(u\in E_{\mathrm{rad}}^{a}\) be a weak solution of \((P_{\lambda})\). Since \(a,b>-4\), we have \(a+4>0\) and \(b+4>0\). Moreover, it follows from
\((f_{2})\) that
$
F(s)>0
$ for every $s\neq 0$. By the
Pohozaev identity,
\[
0\leq \frac{a+4}{2}
\int_{\mathbb{R}^{4}}|x|^{a}u^{2}\dx
=
\lambda(b+4)
\int_{\mathbb{R}^{4}}|x|^{b}F(u)\dx\leq 0,
\] 
which implies that
\[
\int_{\mathbb{R}^{4}}|x|^{a}u^{2}\dx=0.
\]
Since \(|x|^{a}>0\) almost everywhere in \(\mathbb{R}^{4}\), it follows that
\(u=0\) almost everywhere. Hence, Problem \eqref{P} admits no
nontrivial weak solution when \(\lambda\leq 0\).

\vspace{0,3cm}

\noindent \textbf{Acknowledgments} The authors gratefully acknowledge that this work was carried out during Lorena Maria Silva's postdoctoral fellowship at the Federal University of Pernambuco (UFPE), Brazil.

\vspace{0,3cm}

\noindent \textbf{Data Availability} The authors declare that this paper has no associated data.

\vspace{0,3cm}

\noindent \textbf{Conflicts of Interest} The authors declare that there are no conflicts of interest regarding the publication of this paper.

\end{document}